\newtheorem{theorem}{Theorem}
\newtheorem{lemma}[theorem]{Lemma}
\newtheorem{prop}[theorem]{Proposition}
\newtheorem{remark}{Remark}
\newenvironment{proof-sketch}{\noindent{\bf Sketch of Proof}\hspace*{1em}}{\qed\bigskip}
\newcommand{\RR}{\mathbb R}
\newcommand{\NN}{\mathbb N}
\newcommand{\di}{\displaystyle}
\newcommand{\divv}{{\rm div}}
\newcommand{\bb}{\begin{equation}}
\newcommand{\bbb}{\end{equation}}
\renewcommand{\leq}{\leqslant}
\renewcommand{\geq}{\geqslant}
\begin{document}

\title{Anisotropic $(p,q)$-equations with \\ gradient dependent reaction}

\author[N.S. Papageorgiou]{N.S. Papageorgiou}
\address[N.S. Papageorgiou]{National Technical University, Department of Mathematics,
				Zografou Campus, Athens 15780, Greece
				\& Institute of Mathematics, Physics and Mechanics, 1000 Ljubljana, Slovenia
				}
\email{\tt npapg@math.ntua.gr}

\author[V.D. R\u{a}dulescu]{V.D. R\u{a}dulescu}
\address[V.D. R\u{a}dulescu]{Faculty of Applied Mathematics, AGH University of Science and Technology, al. Mickiewicza 30, 30-059 Krak\'ow, Poland \& Department of Mathematics, University of Craiova, 200585 Craiova, Romania
\& Institute of Mathematics, Physics and Mechanics, 1000 Ljubljana, Slovenia
}
\email{\tt radulescu@inf.ucv.ro}

\author[D.D. Repov\v{s}]{D.D. Repov\v{s}}
\address[D.D. Repov\v{s}]{Faculty of Education and Faculty of Mathematics and Physics, University of Ljubljana, 
\& Institute of Mathematics, Physics and Mechanics, 1000 Ljubljana, Slovenia}
\email{\tt dusan.repovs@guest.arnes.si}

\keywords{Anisotropic $(p,q)$-Laplacian, convection, nonvariational problem, regularity theory, maximum principle, fixed point, minimal positive solution.\\
\phantom{aa} {\it 2010 Mathematics Subject Classification}. 35J75, 35J60, 35J20.}

\begin{abstract}
We consider a Dirichlet problem driven by the anisotropic $(p,q)$-Laplacian and a reaction with gradient dependence (convection).
The presence of the gradient in the source term excludes from consideration a variational approach in dealing with the qualitative analysis of this problem with unbalanced growth.
Using the frozen variable method and eventually a fixed point theorem, the main result of this paper establishes that the problem has a positive smooth solution.
\end{abstract}

\maketitle

\section{Introduction and statement of the problem}
In this paper, we are concerned with the study of a nonlinear anisotropic problem whose features are the following:\\ (i) the presence of several differential operators with different growth, which generates a {\it double phase} associated energy;\\ (ii) the reaction combines the multiple effects produced by a convection (gradient) term and a nonlinearity with nonuniform nonresonance near the principal eigenvalue;\\ (iii) the problem has both a nonvariational structure (generated by the convection nonlinearity) and an anisotropic framework (created by the presence of two differential operators with variable exponent and a gradient term with variable potential);\\ (iv) due to the particular structure of the problem studied in this paper, we develop an approach based on the frozen variable method.

 Roughly speaking, the main result of this paper establishes
the following properties:\\ (a)  the minimal solution map is compact;\\ (b) the problem has a positive smooth solution, which is a fixed point of the minimal solution map.

Summarizing, this paper
 is concerned with the refined qualitative  analysis of solutions for a class of {\it nonvariation} problems driven by anisotropic differential operators with {\it unbalanced anisotropic growth}.

 We recall in what follows some of the outstanding contributions of the Italian school  to the study of unbalanced integral functionals and double phase problems.  We first refer to the pioneering contributions of Marcellini \cite{marce1,marce2,marce3} who studied lower semicontinuity and regularity properties of minimizers of certain quasiconvex integrals. Problems of this type arise in nonlinear elasticity and are connected with the deformation of an elastic body, cf. Ball \cite{ball1,ball2}. We also refer to Fusco and Sbordone \cite{fusco} for the study of regularity of minima of anisotropic integrals.

In order to recall the roots of double phase problems, let us assume that $\Omega$ is a bounded domain in ${\mathbb R}^N$ ($N\geq 2$) with smooth boundary. If $u:\Omega\to{\mathbb R}^N$ is the displacement and if $Du$ is the $N\times N$  matrix of the deformation gradient, then the total energy can be represented by an integral of the type
\begin{equation}\label{paolo}I(u)=\int_{\Omega} f(x,Du(x))dx,\end{equation}
where the energy function $f=f(x,\xi):\Omega\times{\mathbb R}^{N\times N}\to{\mathbb R}$ is quasiconvex with respect to $\xi$. One of the simplest examples considered by Ball is given by functions $f$ of the type
$$f(\xi)=g(\xi)+h({\rm det}\,\xi),$$
where ${\rm det}\,\xi$ is the determinant of the $N\times N$ matrix $\xi$, and $g$, $h$ are nonnegative convex functions, which satisfy the growth conditions
$$g(\xi)\geq c_1\,|\xi|^q;\quad\lim_{t\to+\infty}h(t)=+\infty,$$
where $c_1$ is a positive constant and $1<q<N$. The condition $q< N$ is necessary to study the existence of equilibrium solutions with cavities, that is, minima of the integral \eqref{paolo} that are discontinuous at one point where a cavity forms; in fact, every $u$ with finite energy belongs to the Sobolev space $W^{1,q}(\Omega,{\mathbb R}^N)$, and thus it is a continuous function if $q>N$. In accordance with these problems arising in nonlinear elasticity, Marcellini \cite{marce1,marce2} considered continuous functions $f=f(x,u)$ with {\it unbalanced growth} that satisfy
$$c_1\,|u|^q\leq |f(x,u)|\leq c_2\,(1+|u|^p)\quad\mbox{for all}\ (x,u)\in\Omega\times{\mathbb R},$$
where $c_1$, $c_2$ are positive constants and $1\leq q\leq p$. Regularity and existence of solutions of elliptic equations with $p,q$--growth conditions were studied in \cite{marce2}.

The study of non-autonomous functionals characterized by the fact that the energy density changes its ellipticity and growth properties according to the point has been continued in a series of remarkable papers by Mingione {\it et al.} \cite{baroni0}--\cite{baroni}, \cite{colombo0}--\cite{colombo1}. We also refer to Mingione and R\u adulescu \cite{minrad} for
an overview of recent results concerning elliptic variational problems with nonstandard growth conditions and related to different kinds of nonuniformly elliptic operators. 
These contributions are in relationship with the works of Zhikov \cite{zhikov1}, in order to describe the
behavior of phenomena arising in nonlinear
elasticity.
In fact, Zhikov intended to provide models for strongly anisotropic materials in the context of homogenisation.
 In particular, Zhikov considered the following model of
functional  in relationship to the Lavrentiev phenomenon:
$$
{\mathcal P}_{p,q}(u) :=\int_{\Omega} (|\nabla u|^q+a(x)|\nabla u|^p)dx,\quad 0\leq a(x)\leq L,\ 1< q< p.
$$
In this functional, the modulating coefficient $a(x)$ dictates the geometry of the composite made by
two differential materials, with hardening exponents $p$ and $q$, respectively.

The functional ${\mathcal P}_{p,q}$ falls in the realm of the so-called functionals with
nonstandard growth conditions of $(p, q)$--type, according to Marcellini's terminology. This is a functional of the type in \eqref{paolo}, where the energy density satisfies
$$|\xi|^q\leq f(x,\xi)\leq  |\xi|^p+1,\quad 1\leq q\leq p.$$

Another significant model example of a functional with $(p,q)$--growth studied by Mingione {\it et al.} is given by
$$u\mapsto \int_{\Omega} |\nabla u|^q\log (1+|\nabla u|)dx,\quad q\geq 1,$$
which is a logarithmic perturbation of the $p$-Dirichlet energy.

The purpose of this paper is to study the following anisotropic Dirichlet problem with a reaction which depends on the gradient (convection):
\begin{equation}\label{eq1}
  \left\{
\begin{array}{lll}
-\Delta_{p(z)}u(z)-\Delta_{q(z)}u(z)=\hat{r}(z)|Du(z)|^{\tau(z)-1}+f(z,u(z))\text{ in } \Omega,\\
u|_{\partial\Omega}=0,\ u\geq0,\ q(\cdot)<p(\cdot),
\end{array}
\right.
\end{equation}
 where $\Omega\subseteq\RR^N$ is a bounded domain with a $C^2$-boundary $\partial\Omega$.

For $r\in C(\overline{\Omega})$ we define
$$
r_-=\min_{\overline{\Omega}}r \mbox{ and }r_+=\max_{\overline{\Omega}}r.
$$

We consider the set $E_1=\{\hat{r}\in C(\overline{\Omega}):\:1<r_-\}$. For $r\in E_1$, we denote by $\Delta_{r(z)}$   the $r$-Laplace differential operator defined by
$$
\Delta_{r(z)}u=\divv (|Du|^{r(z)-2}Du)\mbox{ for all } u\in W^{1,r(z)}_0(\Omega).
$$

In problem \eqref{eq1}, the differential operator is the sum of two such operators (anisotropic $(p,q)$-equation or anisotropic double phase problem). The reaction (right-hand side of \eqref{eq1}) depends also on the gradient of $u$ (convection). This makes the problem nonvariational, which means that eventually our proof should be topological, based on the fixed point theory. We assume that $q_+<p_-$.

Recently there have been some existence results for elliptic equations driven by the $(p,q)$-Laplacian (or even more general nonhomogeneous operators) and a gradient dependent reaction. We mention the works of Bai \cite{2Bai}, Bai, Gasi\'nski and Papageorgiou \cite{3Bai-Gas-Pap}, Candito, Gasi\'nski and Papageorgiou \cite{5Can-Gas-Pap}, Faria, Miyagaki and Motreanu \cite{11Far-Miy-Mot}, Gasi\'nski, Krech and Papageorgiou \cite{13Gas-Kre-Pap}, Gasi\'nski and Winkert \cite{16Gas-Win}, Liu and Papageorgiou \cite{18Liu-Pap}, Marano and Winkert \cite{19Mar-Win}, Papageorgiou, R\u adulescu and Repov\v s \cite{22Pap-Rad-Rep}, Papageorgiou, Vetro and Vetro \cite{24Pap-Vet-Vet}, and Zeng, Liu and Migorski \cite{30Zen-Liu-Mig}. All the aforementioned papers deal with isotropic equations. To the best of our knowledge, there are no works in the literature dealing with anisotropic $(p,q)$-equations with convection. It appears that our existence theorem here is the first such result.

We mention that equations driven by sum of two differential operators of different nature, appear in mathematical models of physical processes. We refer to the works of Bahrouni, R\u adulescu and Repov\v s \cite{1Bah-Rad-Rep}, Cencelj, R\u adulescu and Repov\v s \cite{6Cen-Rad-Rep}, R\u adulescu \cite{26Rad}, Zhikov \cite{32Zhi} and the references therein.

Our approach is based on the so-called ``frozen variable method". According to this method, in the reaction we fix (freeze) the gradient term and in this way we have a variational problem, which can be treated using tools from the critical point theory. We need to find a canonical way to choose a solution from the solution set of the ``frozen problem". To this end, we show that the ``frozen problem" has a smallest positive solution (minimal positive solution). In this way we can define the minimal solution map. Using an iterative process, we show that this map is compact and then using the Leray-Schauder Alternative Principle, we produce a fixed point for the minimal solution map. This fixed point is the desired positive solution of \eqref{eq1}.

The main features of this paper are the presence of the convection term $|Du|^{r(\cdot)-1}$ (inducing a nonvariational structure of the problem) and the combined effects
generated by the variable exponents $p(\cdot )$ and $q(\cdot )$ (producing an anisotropic abstract setting, which describes patterns associated with strongly anisotropic materials). We also highlight that the growth of the variable exponent $\tau(\cdot)$ associated with the convection term is consistent with the literature (we only restrict $\tau_+<p_-$, see hypotheses $H_0$). We recall that in the case of the usual Laplace operator, as remarked by Serrin \cite{serrin}, Choquet-Bruhat and Leray \cite{choquet}, and Kazdan and Warner \cite{kazdan}, a basic requirement is that the
convection term grows at most quadratically; this is a natural hypothesis in order to apply the maximum principle.

\section{Mathematical background and hypotheses}

The analysis of problem \eqref{eq1} uses Lebesgue and Sobolev spaces with variable exponents. A comprehensive treatment of such spaces can be found in the book of Diening, Harjulehto, H\"asto and Ruzicka \cite{7Die-Har-Has-Ruz}.

Let $M(\Omega)$ be the space of all measurable functions $u:\Omega\to\RR$. As usual, we identify two such functions which differ only on a Lebesgue null set. Let $r\in E_1$. Then the variable exponent Lebesgue space $L^{r(z)}(\Omega)$ is defined as follows
$$
L^{r(z)}(\Omega)=\left\{u\in M(\Omega):\:\int_\Omega |u|^{r(z)}dz<\infty\right\}.
$$

This space is equipped with the so called ``Luxemburg norm" defined by
$$
\|u\|_{r(z)}=\inf\left\{\lambda>0:\: \int_\Omega \left[\frac{|u|}{\lambda}\right]^{r(z)}dz\leq1\right\}.
$$

The space  $\left(L^{r(z)}(\Omega),\|\cdot\|_{r(z)}\right)$ is separable and uniformly convex (thus, reflexive by the Milman-Pettis theorem, see  Theorem 3.4.28 of Papageorgiou and Winkert \cite[p. 225]{25Pap-Win}). Let $r'\in E_1$ be defined by $r'(z)=\frac{r(z)}{r(z)-1}$ $z\in\overline{\Omega}$ (the conjugate variable exponent to $r(\cdot)$). We have $L^{r(z)}(\Omega)^*=L^{r'(z)}(\Omega)$ and also the following version of the H\"older inequality is true
$$
\int_\Omega |uv|dz\leq \left[\frac{1}{r_-}+\frac{1}{r'_-}\right]\|u\|_{r(z)}\|v\|_{r'(z)}
$$
for all $u\in L^{r(z)}(\Omega)$, $v\in L^{r'(z)}(\Omega)$.

Suppose that $r_1,r_2\in E_1$ and we have $r_1(z)\leq r_2(z)$ for all $z\in\overline{\Omega}$. Then
$$
L^{r_2(z)}(\Omega)\hookrightarrow L^{r_1(z)}(\Omega) \mbox{ continuously. }
$$

Having the variable exponent Lebesgue spaces, we can define in the usual way the corresponding variable exponent Sobolev spaces. So, given $r\in E_1$ the variable exponent Sobolev space $W^{1,r(z)}(\Omega)$ is defined by
$$
W^{1,r(z)}(\Omega)=\left\{u\in L^{r(z)}(\Omega):\:|Du|\in L^{r(z)}(\Omega)\right\}.
$$

Here, the gradient $Du$ is understood in the weak sense. The space $W^{1,r(z)}(\Omega)$ is furnished with the norm
$$
\|u\|_{1,r(z)}=\|u\|_{r(z)}+\| \, |Du|\, \|_{r(z)} \mbox{ for all }u\in W^{1,r(z)}(\Omega).
$$

For simplicity, in the sequel we write $\|Du\|_{r(z)}=\|\, |Du|\, \|_{r(z)}$.

Also, if $r\in E_1$ is Lipschitz continuous (that is, $r\in E_1\cap C^{0,1}(\overline{\Omega})$), then we define
$$
W^{1,r(z)}_0(\Omega)=\overline{C_c^\infty(\Omega)}^{\|\cdot\|_{1,r(z)}}.
$$

Both spaces $W^{1,r(z)}(\Omega)$ and $W^{1,r(z)}_0(\Omega)$ are separable and uniformly convex (thus, reflexive). For the space $W^{1,r(z)}_0(\Omega)$ the Poincar\' e inequality is valid, namely there exists $\hat{C}>0$ such that
$$
\|u\|_{r(z)}\leq \hat{C}\|Du\|_{r(z)} \mbox{ for all }u\in W^{1,r(z)}_0(\Omega).
$$

This means that on $W^{1,r(z)}_0(\Omega)$ we can consider the equivalent norm
$$
\|u\|_{1,r(z)}=\|Du\|_{r(z)} \mbox{ for all } u\in W^{1,r(z)}_0(\Omega).
$$

Given $r\in E_1$, we introduce the critical variable exponent $r^*(\cdot)$ corresponding to $r(\cdot)$, defined by
$$
r^*(z)=\left\{
         \begin{array}{ll}
           \frac{Nr(z)}{N-r(z)}, & \hbox{ if } r(z)<N \\
           +\infty, & \hbox{ if } N\leq r(z)
         \end{array}
       \right. \mbox{ for all }z\in\overline{\Omega}.
$$

Consider $r\in E_1\cap C^{0,1}(\overline{\Omega})$, $q\in E_1$ with $q_+<N$ and assume that $1<q(z)\leq r^*(z)$ (resp., $1<q(z)<r^*(z)$) for all $z\in\overline{\Omega}$. Then we have the following embeddings (anisotropic Sobolev embedding theorem):
$$
W^{1,r(z)}_0(\Omega)\hookrightarrow L^{q(z)}(\Omega) \mbox{ continuously }
$$
$$
\left(\mbox{ resp., } W^{1,r(z)}_0(\Omega)\hookrightarrow L^{q(z)}(\Omega) \mbox{ compactly}\right).
$$

The following modular function is very useful in the study of the variable exponent spaces
$$
\rho_r(u)=\int_\Omega |u|^{r(z)} dz \mbox{ for all }u\in L^{r(z)}(\Omega)\;\;(r\in E_1).
$$

Again we write $\rho_r(Du)=\rho_r(|Du|)$.

This function is closely related to the norm.

\begin{prop}\label{prop1}
  If $r\in E_1$ and $\{u, u_n\}_{n\in\NN}\subseteq L^{r(z)}(\Omega)$, then
\begin{itemize}
  \item[(a)] $\|u\|_{r(z)}=\mu\Leftrightarrow\rho_r\left(\frac{u}{\mu}\right)=1$;
  \item[(b)] $\|u\|_{r(z)}<1$  (resp. $=1$, $>1$)  $\Leftrightarrow$ $\rho_r(u)<1$ (resp. $=1$, $>1$);
  \item[(c)] $\|u\|_{r(z)}<1$ $\Rightarrow$ $\|u\|_{r(z)}^{r_+}\leq \rho_r(u)\leq\|u\|_{r(z)}^{r_-}$;
  \item[(d)] $\|u\|_{r(z)}>1$ $\Rightarrow$ $\|u\|_{r(z)}^{r_-}\leq \rho_r(u)\leq\|u\|_{r(z)}^{r_+}$;
  \item[(e)] $\|u_n\|_{r(z)}\to0$ $\Leftrightarrow$ $\rho_r(u_n)\to0$;
  \item[(f)] $\|u_n\|_{r(z)}\to+\infty$ $\Leftrightarrow$ $\rho_r(u_n)\to+\infty$.
\end{itemize}
\end{prop}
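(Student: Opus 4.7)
The plan is to treat (a) as the central fact from which all the other equivalences follow by elementary manipulations. Once (a) is in hand, part (b) is immediate by specializing to $\mu = 1$ together with a short strict-monotonicity observation, parts (c)--(d) fall out by pulling a constant scaling factor out of the modular, and (e)--(f) follow from (c)--(d) by a sandwich argument.

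To establish (a), I would analyze, for fixed nonzero $u \in L^{r(z)}(\Omega)$ (the case $u \equiv 0$ being trivial), the real-variable function
\[
\varphi(\lambda) := \rho_r(u/\lambda) = \intom (|u(z)|/\lambda)^{r(z)}\,dz,\qquad \lambda>0.
\]
Dominated convergence on any compact subinterval of $(0,\infty)$ (majorize $(|u|/\lambda)^{r(z)}$ by $(2|u|/\lambda_0)^{r(z)}$ near a reference point $\lambda_0$) shows $\varphi$ is continuous, and pointwise comparison of the integrand gives strict monotonicity since $u$ is nonzero on a set of positive measure. Monotone convergence yields $\varphi(\lambda)\to +\infty$ as $\lambda\to 0^+$, while dominated convergence (using the majorant $|u|^{r(z)} + 1$ for $\lambda\geq 1$) yields $\varphi(\lambda)\to 0$ as $\lambda\to+\infty$. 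So a unique $\mu > 0$ satisfies $\varphi(\mu)=1$, and strict monotonicity identifies this $\mu$ with $\inf\{\lambda > 0 : \varphi(\lambda)\leq 1\} = \|u\|_{r(z)}$.

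For (c) and (d), set $\mu = \|u\|_{r(z)}$, so by (a) we have $\intom \mu^{-r(z)}|u|^{r(z)}\,dz = 1$. The pointwise inequalities $\mu^{-r_+}\leq \mu^{-r(z)} \leq \mu^{-r_-}$ when $\mu<1$ (and the reverse when $\mu>1$) then translate, after multiplying through by $\mu^{r_+}$ or $\mu^{r_-}$ and integrating, into the two-sided bounds claimed in (c) and (d). Part (b) is the specialization of (a) to $\mu = 1$ combined with the strict monotonicity of $\varphi$: $\|u\|_{r(z)} < 1$ iff $\varphi(1) > \varphi(\|u\|_{r(z)}) = 1$, and similarly for the other two cases.

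Finally, for (e): if $\|u_n\|_{r(z)} \to 0$, eventually $\|u_n\|_{r(z)} < 1$ and (c) gives $\rho_r(u_n) \leq \|u_n\|_{r(z)}^{r_-} \to 0$; conversely, $\rho_r(u_n) \to 0$ forces via (b) that $\|u_n\|_{r(z)} < 1$ eventually, and (c) then yields $\|u_n\|_{r(z)}^{r_+} \leq \rho_r(u_n) \to 0$. Part (f) is symmetric using (d). The only nontrivial step is the technical verification of continuity, strict monotonicity, and boundary limits for $\varphi$ in the variable-exponent setting; once those are pinned down, the remaining assertions reduce to algebra.
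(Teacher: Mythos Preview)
The paper does not prove Proposition~\ref{prop1}; it is stated as a standard fact about variable-exponent Lebesgue spaces (the book of Diening, Harjulehto, H\"asto and Ruzi\v cka is cited just before). Your proposal supplies the standard argument, and the overall architecture---establish (a) via the strict monotonicity and continuity of $\lambda\mapsto\rho_r(u/\lambda)$, then derive (b)--(f) from it---is exactly the usual route and is sound.

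Two small sign slips to fix. In your sketch of (b) you write ``$\|u\|_{r(z)}<1$ iff $\varphi(1)>\varphi(\|u\|_{r(z)})=1$'', but $\varphi$ is strictly \emph{decreasing}, so $\|u\|_{r(z)}<1$ is equivalent to $\varphi(1)<\varphi(\|u\|_{r(z)})=1$, i.e.\ $\rho_r(u)<1$, which is what (b) asserts. Similarly, in the derivation of (c)--(d) you assert $\mu^{-r_+}\le\mu^{-r(z)}\le\mu^{-r_-}$ when $\mu<1$; in fact for $0<\mu<1$ the map $r\mapsto\mu^{-r}$ is increasing, so the correct chain is $\mu^{-r_-}\le\mu^{-r(z)}\le\mu^{-r_+}$. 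Integrating against $|u|^{r(z)}$ then gives $\mu^{-r_-}\rho_r(u)\le 1\le \mu^{-r_+}\rho_r(u)$, hence $\mu^{r_+}\le\rho_r(u)\le\mu^{r_-}$, which is (c). With these corrections your argument goes through.
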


Given $r\in E_1\cap C^{0,1}(\overline{\Omega})$, we have
$$
W^{1,r(z)}_0(\Omega)^*=W^{-1,r'(z)}(\Omega).
$$

Then we introduce the nonlinear map $A_{r(z)}: W^{1,r(z)}_0(\Omega)\to W^{-1,r'(z)}(\Omega)= W^{1,r(z)}_0(\Omega)^*$ defined by
$$
\langle A_{r(z)}(u),h\rangle=\int_\Omega |Du|^{r(z)-2}(Du,Dh)_{\RR^N}dz
$$
for all $u,h \in W^{1,r(z)}_0(\Omega)$.

This operator has the following properties (see Gasi\'nski and Papageorgiou \cite[Proposition 2.5]{15Gas-Pap} and R\u adulescu-Repov\v s \cite[p. 40]{27Rad-Rep}).

\begin{prop}\label{prop2}
  The operator $A_{r(z)}:W^{1,r(z)}_0(\Omega)\to W^{-1,r'(z)}(\Omega)$ is bounded (that is, maps bounded sets to bounded sets), continuous, strictly monotone (hence maximal monotone too) and of type $(S)_+$, that is,
$$
``u_n\overset{w}{\to}u \mbox{ in }W^{1,r(z)}_0(\Omega) \mbox{ and }\limsup_{n\to\infty}\langle A_{r(z)}(u_n),u_n-u\rangle\leq0
$$
$$
\Downarrow
$$
$$
u_n\to u \mbox{ in } W^{1,r(z)}_0(\Omega)".
$$
\end{prop}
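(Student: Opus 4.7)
The plan is to prove the four assertions of Proposition \ref{prop2} in turn, with the $(S)_+$-property being the main obstacle.

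\textbf{Boundedness and continuity.} First, I would apply the variable-exponent H\"older inequality to get
$$|\langle A_{r(z)}(u),h\rangle| \leq \int_\Omega |Du|^{r(z)-1}|Dh|\,dz \leq C\,\bigl\| \, |Du|^{r(z)-1}\bigr\|_{r'(z)}\|Dh\|_{r(z)},$$
and then use Proposition \ref{prop1}(c)--(d) together with the identity $\rho_{r'}(|Du|^{r(z)-1})=\rho_r(Du)$ to bound the first factor by a power of $\|Du\|_{r(z)}$, yielding boundedness on bounded sets. For continuity, if $u_n\to u$ in $W^{1,r(z)}_0(\Omega)$ then, along a subsequence, $Du_n\to Du$ a.e.\ with an $L^{r(z)}$--majorant; hence $|Du_n|^{r(z)-2}Du_n\to |Du|^{r(z)-2}Du$ a.e.\ with an $L^{r'(z)}$--majorant, and a dominated convergence argument in the modular sense, combined with Proposition \ref{prop1}(e), produces the desired convergence in $L^{r'(z)}(\Omega)^N$.

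\textbf{Strict monotonicity and maximality.} I would integrate the pointwise inequality $(|x|^{r-2}x-|y|^{r-2}y,x-y)_{\RR^N}\geq 0$, which is valid for every fixed $r>1$ and strict unless $x=y$, to obtain $\langle A_{r(z)}(u)-A_{r(z)}(v),u-v\rangle\geq 0$. Equality forces $Du=Dv$ a.e., and the Poincar\'e inequality on $W^{1,r(z)}_0(\Omega)$ then yields $u=v$. Since continuity entails demicontinuity, a strictly monotone, demicontinuous operator from a reflexive Banach space to its dual is maximal monotone by a standard Browder--Minty type result.

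\textbf{The $(S)_+$-property.} This is the crux. Suppose $u_n\overset{w}{\to}u$ in $W^{1,r(z)}_0(\Omega)$ and $\limsup_n \langle A_{r(z)}(u_n),u_n-u\rangle\leq 0$. By monotonicity, $\langle A_{r(z)}(u_n),u_n-u\rangle\geq \langle A_{r(z)}(u),u_n-u\rangle$, whose right-hand side tends to zero; combined with the limsup hypothesis this gives $\langle A_{r(z)}(u_n),u_n-u\rangle\to 0$, and therefore
$$\int_\Omega \bigl(|Du_n|^{r(z)-2}Du_n-|Du|^{r(z)-2}Du,\,Du_n-Du\bigr)_{\RR^N}\,dz \to 0.$$
The integrand $I_n(z)$ is pointwise nonnegative, so (up to a subsequence) $I_n\to 0$ a.e. Invoking the standard pointwise convexity estimates (a lower bound of the form $c|Du_n-Du|^{r(z)}$ where $r(z)\geq 2$, and a Simon-type ratio bound where $1<r(z)<2$), I would extract $Du_n\to Du$ a.e.\ in $\Omega$. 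A Vitali/Fatou argument, using that $|Du_n|^{r(z)-2}Du_n$ is bounded in $L^{r'(z)}(\Omega)^N$, then upgrades pointwise convergence to modular convergence $\rho_r(Du_n-Du)\to 0$, and Proposition \ref{prop1}(e) converts this to norm convergence in $W^{1,r(z)}_0(\Omega)$. The delicate point is the uniform treatment of the two regimes $r(z)\geq 2$ and $1<r(z)<2$ simultaneously over $\Omega$, for which one typically splits $\Omega$ according to the size of $|Du_n|+|Du|$ and argues separately on each piece.
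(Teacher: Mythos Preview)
The paper does not supply its own proof of Proposition~\ref{prop2}; immediately before the statement it simply refers the reader to Gasi\'nski--Papageorgiou \cite[Proposition~2.5]{15Gas-Pap} and R\u adulescu--Repov\v s \cite[p.~40]{27Rad-Rep}, and then uses the result as a black box throughout. So there is no argument in the paper to compare against.

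Your proposal is a correct and standard direct proof. The boundedness/continuity and strict monotonicity parts are exactly as one would expect. For the $(S)_+$ step your outline is also sound, though the passage from ``$Du_n\to Du$ a.e.'' to ``$\rho_r(Du_n-Du)\to 0$'' deserves one more sentence: the cleanest way is to observe that $\langle A_{r(z)}(u_n),u_n-u\rangle\to 0$ together with the weak convergence $A_{r(z)}(u_n)\rightharpoonup A_{r(z)}(u)$ (which follows from boundedness in $L^{r'(z)}(\Omega)^N$ plus the a.e.\ convergence you have already established) yields $\rho_r(Du_n)=\langle A_{r(z)}(u_n),u_n\rangle\to \langle A_{r(z)}(u),u\rangle=\rho_r(Du)$, and then uniform convexity of $L^{r(z)}(\Omega)^N$ upgrades weak convergence of gradients plus convergence of modulars to strong convergence. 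This is slightly more transparent than the Vitali/Fatou route you sketch, which as written does not quite explain why $|Du_n-Du|^{r(z)}$ is uniformly integrable.
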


We will also use the Banach space $C_0^1(\overline{\Omega})=\{u\in C^1(\overline{\Omega}):\:u|_{\partial\Omega}=0\}$. This is an ordered Banach space with positive cone $C_+=\left\{u\in C_0^1(\overline{\Omega}):\: u(z)\geq0 \mbox{ for all }z\in\overline{\Omega}\right\}$. This cone has a nonempty interior given by
$$
{\rm int}\,C_+=\left\{u\in C_+:\:u(z)>0 \mbox{ for all }z\in\Omega,\ \frac{\partial u}{\partial n}|_{\partial \Omega}<0\right\}
$$
with $n(\cdot)$ being the outward unit normal on $\partial \Omega$.

Consider the following anisotropic eigenvalue problem
\begin{equation}\label{eq2}
  -\Delta_{p(z)} u(z)=\hat{\lambda}|u(z)|^{p(z)-2}u(z) \mbox{ in } \Omega,\;u|_{\partial\Omega}=0,
\end{equation}
with $p\in E_1$. We say that $(\hat{\lambda},\hat{u})\in \RR\times\left( W^{1,r(z)}_0(\Omega)\setminus\{0\} \right)$ is an ``eigenpair" for problem \eqref{eq2}, if
$$
\langle A_{p(z)}(\hat{u}),h\rangle=\hat{\lambda}\int_\Omega |\hat{u}(z)|^{p(z)-2}\hat{u}(z)h(z)dz \mbox{ for all } h\in W^{1,r(z)}_0(\Omega).
$$

Then $\hat{\lambda}$ is an ``eigenvalue" and $\hat{u}\not=0$ is a corresponding ``eigenfunction". We let
$$
\mathcal{L}=\{\hat{\lambda}\in\RR:\:\hat{\lambda} \mbox{ is an eigenvalue of \eqref{eq2}}\}.
$$

For the anisotropic eigenvalue problem, in contrast to the isotropic one, we can have $\inf\mathcal{L}=0$ (see Fan, Zhang and Zhao \cite[Theorem 3.1]{9Fan-Zha-Zha}). If we can find $\eta\in\RR^N$ ($N>1$) such that for all $z\in\Omega$, the function $t\mapsto \vartheta(t)=p(z+t\eta)$ is monotone on $T_z=\{t\in \RR:\:z+t\eta\in \Omega\}$ and $p\in C^1(\overline{\Omega})$, then problem \eqref{eq2} has a principal eigenvalue $\hat{\lambda}_1>0$ with corresponding positive eigenfunction $\hat{u}_1\in {\rm int}\,C_+$ (see Fan, Zhang and Zhao \cite[Theorem 3.3]{9Fan-Zha-Zha}) (see also Byun and Ko \cite{4Byu-Ko} and Fan \cite{8Fan}). We have
\begin{equation}\label{eq3}
  0<\hat{\lambda}_1=\frac{\rho_p(D\hat{u}_1)}{\rho_p(\hat{u}_1)}\leq
\frac{\rho_p(Du)}{\rho_p(u)} \mbox{ for all }\mu\in W^{1,r(z)}_0(\Omega),\ u\not=0.
\end{equation}

As we already mentioned in the Introduction, our approach is eventually topological and uses the ``Leray-Schauder Alternative Principle".

Given a Banach space $X$, a map $\xi:X\to X$ is said to be ``compact" if it is continuous and maps bounded sets into relatively compact sets. If $X$ is reflexive and $\xi:X\to X$ is completely continuous (that is, $x_n\overset{w}{\to}x$ in $X$ $\Rightarrow$ $\xi(x_n)\to \xi(x)$), then $\xi(\cdot)$ is compact (see Proposition 3.1.7 of Gasi\'nski and Papageorgiou \cite[p. 268]{14Gas-Pap}). The Leray-Schauder Alternative Principle asserts the following property.
\begin{theorem}\label{th3}
  If $X$ is a Banach space, $\xi:X\to X$ is compact and
$$
D(\xi)=\{u\in X:\:u=t\xi(u),\ 0<t<1\},
$$
then one of the following statements is true:
\begin{itemize}
  \item[(a)] $D(\xi)$ is unbounded;
  \item[(b)] $\xi(\cdot)$ has a fixed point.
\end{itemize}
\end{theorem}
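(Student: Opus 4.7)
The plan is to convert the dichotomy into an application of Schauder's fixed point theorem via a radial retraction onto a large ball. Suppose alternative (a) fails, so that $D(\xi)$ is bounded. Choose $R>0$ strictly larger than $\sup\{\|u\|:u\in D(\xi)\}$; the aim is to locate a fixed point of $\xi$ inside the closed ball $\overline{B}_R=\{x\in X:\|x\|\le R\}$.

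I would introduce the radial retraction $r:X\to\overline{B}_R$ defined by $r(x)=x$ if $\|x\|\le R$ and $r(x)=Rx/\|x\|$ otherwise. A direct check shows $r$ is continuous on $X$ (the two formulas agree on the sphere $\|x\|=R$), and $\|r(x)\|\le R$ for every $x$. Form the composition $\psi=r\circ\xi:\overline{B}_R\to\overline{B}_R$. Since $\xi$ is compact, $\xi(\overline{B}_R)$ is relatively compact; as $r$ is continuous, $r(\xi(\overline{B}_R))$ is again relatively compact, so $\psi$ is a continuous self-map of the nonempty, closed, bounded, convex set $\overline{B}_R$ whose image is relatively compact.

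Schauder's fixed point theorem then supplies a point $u_0\in\overline{B}_R$ with $\psi(u_0)=u_0$, that is $r(\xi(u_0))=u_0$. I would then split into cases according to $\|\xi(u_0)\|$. If $\|\xi(u_0)\|\le R$, the retraction $r$ acts as the identity on $\xi(u_0)$, whence $u_0=\xi(u_0)$ and alternative (b) holds. Otherwise $\|\xi(u_0)\|>R$, and then $u_0=r(\xi(u_0))=t_0\,\xi(u_0)$ with $t_0=R/\|\xi(u_0)\|\in(0,1)$; but this places $u_0\in D(\xi)$ while simultaneously forcing $\|u_0\|=R$, contradicting the choice of $R$ above $\sup\{\|u\|:u\in D(\xi)\}$. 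The dichotomy is thus established.

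The main ``obstacle'' is really a verification rather than a conceptual hurdle: one must confirm that $r\circ\xi$ inherits compactness from $\xi$, which follows because continuous images of relatively compact sets are relatively compact. The only technical subtlety is selecting $R$ strictly larger than the supremum of norms on $D(\xi)$, so that the equality $\|u_0\|=R$ arising in the second case yields a clean contradiction; this is possible precisely because $D(\xi)$ is assumed bounded. With these two pieces in hand, Schauder's theorem closes the argument.
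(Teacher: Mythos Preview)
Your argument is correct and is in fact the standard derivation of the Leray--Schauder Alternative from Schauder's fixed point theorem: retract onto a ball of radius $R$ strictly larger than $\sup\{\|u\|:u\in D(\xi)\}$, apply Schauder to $r\circ\xi$, and rule out the case $\|\xi(u_0)\|>R$ because it would produce an element of $D(\xi)$ with norm exactly $R$. All the verifications you flag (continuity of $r$, compactness of $r\circ\xi$, the strict inequality in the choice of $R$) are handled correctly.

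As for comparison with the paper: there is nothing to compare. The paper states Theorem~\ref{th3} as a background tool (the Leray--Schauder Alternative Principle) and does not supply a proof; it is simply cited as a known result and later invoked in Proposition~\ref{prop12} and the final existence theorem. So your proposal fills in a proof that the authors deliberately omitted, and the route you take is the classical one.
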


Throughout this paper we will use the following notation. By $\|\cdot\|$ we denote the norm of the Sobolev space $W^{1,p(z)}_0(\Omega)$. If $p\in E_1\cap C^{0,1}(\overline{\Omega})$, then by the Poincar\'e inequality, we have
$$
\|u\|=\|Du\|_{p(z)} \mbox{ for all }u\in W^{1,p(z)}_0(\Omega).
$$

Given $u\in W^{1,p(z)}_0(\Omega)$, $u\geq0$,  we denote by $[0,u]$ the order interval
$$
[0,u]=\left\{h\in W^{1,p(z)}_0(\Omega):\:0\leq h(z)\leq u(z) \mbox{ for a.a. }z\in \Omega\right\}.
$$

Let $g:\Omega\times\RR\to \RR$ be a measurable function. By $N_g(\cdot)$ we denote the Nemytski (superposition) operator defined by
$$
N_g(u)(\cdot)=g(\cdot,u(\cdot)) \mbox{ for all } u:\Omega\to\RR \mbox{ measurable. }
$$

Evidently, $z\mapsto N_g(u)(z)$ is measurable. Recall that if $g:\Omega\times\RR\to\RR$ is a Carath\'eodory function (that is, for all $x\in \RR$, the mapping $z\mapsto g(z,x)$ is measurable and for a.a. $z\in\Omega$, the mapping $x\mapsto g(z,x)$ is continuous), then $(z,x)\mapsto g(z,x)$ is measurable (see Gasi\'nski and Papageorgiou \cite[p. 405]{14Gas-Pap}).

For every $x\in\RR$, we set $x^\pm=\max\{\pm x,0\}$ and then for $u\in W^{1,p(z)}_0(\Omega)$ we define $u^\pm(\cdot)=u(\cdot)^\pm$. We know that
$$
u^\pm\in W^{1,p(z)}_0(\Omega),\;u=u^+-u^-,\;|u|=u^+ +u^-.
$$

A set $S\subseteq W^{1,p(z)}_0(\Omega)$ is said to be ``downward directed", if for every pair $u_1,u_2\in S$, we can find $u\in S$ such that $u\leq u_1$, $u\leq u_2$.

\smallskip
Our hypotheses on the data of problem \eqref{eq1} are the following:

\smallskip
$H_0$: $p\in C^1(\overline{\Omega})$, there exists a vector $\hat{\eta}\in \RR^N$ such that for all $z\in\Omega$, the function $t\mapsto p(z+t\hat{\eta})$ is monotone on $I_z=\{t\in \RR:\: z+t\hat{\eta}\in \Omega\}$, $q\in E_1\cap C^{0,1}(\overline{\Omega})$, $\tau\in E_1$, $\tau_+<p_-\leq p_+<p_-^*$, $0\leq p_+-p_-\leq1$, $q_+<p_-$, and $\hat{r}\in L^\infty(\Omega)$, $\hat{r}(z)\geq0$ for a.a. $z\in \Omega$, $\hat{r}\not=0$.

\begin{remark}
  As we already mentioned earlier in this section, the hypotheses on the exponent $p(\cdot)$, imply that the eigenvalue problem \eqref{eq2} has a principal eigenvalue $\hat{\lambda}_1>0$ and an associated positive eigenfunction $\hat{u}_1\in{\rm int}\,C_+$ (see Fan, Zhang and Zhao \cite{9Fan-Zha-Zha}, Fan \cite{8Fan}, and Byun and Ko \cite{4Byu-Ko}).
\end{remark}

The hypotheses on the perturbation $f(z,x)$ are:

\smallskip
$H_1$: $f:\Omega\times\RR\to\RR$ is a Carath\'eodory function, $f(z,0)=0$ for a.a. $z\in\Omega$ and
\begin{itemize}
  \item[$(i)$] for every $\rho>0$, there exists $a_\rho\in L^\infty(\Omega)$ such that
$$
0\leq f(z,x)\leq a_\rho(z) \mbox{ for a.a. $z\in \Omega$, all $0\leq x\leq\rho$;}
$$
  \item[$(ii)$] there exists a function $\vartheta\in L^\infty(\Omega)$ such that
$$
\vartheta(z)\leq \hat{\lambda}_1 \mbox{ for a.a. } z\in\Omega, \vartheta\not\equiv\hat{\lambda}_1,
$$
$$
\limsup_{x\to+\infty}\frac{f(z,x)}{x^{p(z)-1}}\leq \vartheta(z) \mbox{ and } \limsup_{x\to+\infty}\frac{p_+ F(z,x)}{x^{p(z)}}\leq \vartheta(z)
$$
uniformly for a.a. $z\in\Omega$, with $F(z,x)=\displaystyle{\int_0^x f(z,s)ds}$;
  \item[$(iii)$] there exist $\eta_0>0$ and $M>0$ such that
$$
-\eta_0\leq \hat{\lambda}_1 x^{p(z)}-p_+F(z,x) \mbox{ for a.a. $z\in\Omega$, all $x\geq M$;}
$$
  \item[$(iv)$] there exist $\mu\in E_1$ with $\mu_+<q_-$ and $\delta>0$ such that
$$
C_0 x^{\mu(z)-1}\leq f(z,x) \mbox{ for a.a. }z\in\Omega, \mbox{ all }0\leq x\leq \delta, \mbox{ some }C_0>0;
$$
  \item[$(v)$] for a.a. $z\in \Omega$, all $x\in\RR$ and all $t\in (0,1)$ we have
$$
f\left(z,\frac{1}{t}x\right)\leq \frac{1}{t^{p(z)-1}}f(z,x).
$$
\end{itemize}

\begin{remark}
  Hypothesis $H_1(ii)$ implies that at $+\infty$ we have nonuniform nonresonance with respect to the principal eigenvalue $\hat{\lambda}_1>0$. Hypothesis $H_1(iv)$ implies the presence of a concave term near zero. Hypothesis $H_1(v)$ is satisfied if for example for a.a. $z\in\Omega$ the quotient function $x\mapsto \frac{f(z,x)}{x^{p(z)-1}}$ is nondecreasing on $\overset{\circ}{\RR}_+=(0,+\infty)$.
\end{remark}

The following lemma will help us to exploit the nonuniform nonresonance condition in hypothesis $H_1(ii)$.

\begin{lemma}\label{lem4}
  If $\vartheta\in L^\infty(\Omega)$, $\vartheta(z)\leq \hat{\lambda}_1$ for a.a. $z\in\Omega$ and $\vartheta\not\equiv\hat{\lambda}_1$, then there exists $C_1\in (0,1)$ such that
$$
C_1\rho_p(Du)\leq \rho_p(Du)-\int_\Omega \vartheta(z)|u|^{p(z)}dz
$$
for all $u\in W^{1,p(z)}_0(\Omega)$.
\end{lemma}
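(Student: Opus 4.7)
The plan is to argue by contradiction, exploiting weak compactness together with the variational characterization \eqref{eq3} of the principal eigenvalue $\hat\lambda_1$. Suppose no such $C_1 \in (0,1)$ exists. Then for each $n \geq 2$ I can pick $u_n \in W^{1,p(z)}_0(\Omega) \setminus \{0\}$ with $\rho_p(Du_n) - \int_\Omega \vartheta |u_n|^{p(z)} dz < n^{-1} \rho_p(Du_n)$. Combined with \eqref{eq3} and the pointwise bound $\vartheta \leq \hat\lambda_1$, this squeezes the three quantities $\rho_p(Du_n)$, $\int_\Omega \vartheta |u_n|^{p(z)} dz$, and $\hat\lambda_1 \rho_p(u_n)$ together, forcing $\rho_p(Du_n)/\rho_p(u_n) \to \hat\lambda_1$ and $\int_\Omega \vartheta |u_n|^{p(z)} dz / \rho_p(u_n) \to \hat\lambda_1$; in other words, $(u_n)$ is an asymptotic minimizing sequence for the Rayleigh-type quotient that appears in \eqref{eq3}.

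Next, I would rescale $v_n := u_n / \|u_n\|_{p(z)}$ so that $\rho_p(v_n) = 1$ by Proposition~\ref{prop1}(b). Using the norm-modular relations in Proposition~\ref{prop1}(c)-(d) and the hypothesis $0 \leq p_+ - p_- \leq 1$ from $H_0$, I verify that $(v_n)$ inherits the above asymptotics: $\rho_p(Dv_n) \to \hat\lambda_1$ and $\int_\Omega \vartheta |v_n|^{p(z)} dz \to \hat\lambda_1$, and in particular $(v_n)$ is bounded in $W^{1,p(z)}_0(\Omega)$. By reflexivity and the compact anisotropic Sobolev embedding $W^{1,p(z)}_0(\Omega) \hookrightarrow L^{p(z)}(\Omega)$, a subsequence satisfies $v_n \rightharpoonup v$ in $W^{1,p(z)}_0(\Omega)$ and $v_n \to v$ in $L^{p(z)}(\Omega)$. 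Then $\rho_p(v) = 1$ by Proposition~\ref{prop1}(e), so $v \neq 0$; weak lower semicontinuity of the convex continuous modular $u \mapsto \rho_p(Du)$ gives $\rho_p(Dv) \leq \hat\lambda_1$; and dominated convergence along an a.e.\ convergent sub-subsequence dominated by an $L^{p(z)}$-function yields $\int_\Omega \vartheta |v|^{p(z)} dz = \hat\lambda_1$.

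Finally, I chain the inequalities: $\hat\lambda_1 = \int_\Omega \vartheta |v|^{p(z)} dz \leq \hat\lambda_1 \rho_p(v) = \hat\lambda_1$ (using $\vartheta \leq \hat\lambda_1$) and, by \eqref{eq3}, $\hat\lambda_1 = \hat\lambda_1 \rho_p(v) \leq \rho_p(Dv) \leq \hat\lambda_1$. All these become equalities, so $v$ attains the infimum in the Rayleigh characterization of $\hat\lambda_1$ and is therefore a principal eigenfunction; invoking the Fan--Zhang--Zhao structure quoted in the paper (in particular $\hat u_1 \in {\rm int}\,C_+$), $|v(z)| > 0$ for every $z \in \Omega$. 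The remaining identity $\int_\Omega (\hat\lambda_1 - \vartheta) |v|^{p(z)} dz = 0$ together with $\hat\lambda_1 - \vartheta \geq 0$ and $|v|^{p(z)} > 0$ a.e.\ forces $\vartheta \equiv \hat\lambda_1$ a.e.\ in $\Omega$, contradicting the hypothesis.

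The main obstacle is the rescaling step: because $W^{1,p(z)}_0(\Omega)$ lacks clean algebraic homogeneity under scalar multiplication (norm and modular are not $p$-homogeneously comparable when $p(\cdot)$ varies), transferring the ``asymptotic minimizer'' property from $(u_n)$ to the normalized sequence $(v_n)$ requires careful estimates via Proposition~\ref{prop1}(c)--(d) and the control $p_+ - p_- \leq 1$. A secondary delicate point is identifying the weak limit $v$ as a sign-definite principal eigenfunction, which relies on the principal-eigenvalue theory of Fan--Zhang--Zhao cited in the paper.
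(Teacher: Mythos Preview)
Your approach differs from the paper's. The paper gives a two-line direct argument: it introduces the \emph{weighted} eigenvalue problem $-\Delta_{p(z)}u=\tilde\lambda\,\vartheta(z)|u|^{p(z)-2}u$ with Dirichlet data, quotes Fan--Zhang--Zhao to obtain a principal eigenvalue $\tilde\lambda_1>1$ (strictness coming from $\vartheta\le\hat\lambda_1$, $\vartheta\not\equiv\hat\lambda_1$), and then the variational characterization $\rho_p(Du)\ge\tilde\lambda_1\int_\Omega\vartheta|u|^{p(z)}dz$ yields the lemma with $C_1=1-1/\tilde\lambda_1$. No normalization or limiting procedure is needed.

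Your contradiction/compactness route has a genuine gap at precisely the point you flag. The inequality you start from,
\[
\int_\Omega\vartheta|u_n|^{p(z)}dz>\Bigl(1-\tfrac1n\Bigr)\rho_p(Du_n),
\]
is \emph{not} preserved under the rescaling $v_n=u_n/\|u_n\|_{p(z)}$, because both sides acquire the pointwise-varying factor $\|u_n\|_{p(z)}^{-p(z)}$ rather than a common scalar. Consequently you cannot infer that $\int_\Omega\vartheta|v_n|^{p(z)}dz>(1-\varepsilon_n)\rho_p(Dv_n)$, nor that $\rho_p(Dv_n)$ is bounded above: from Proposition~\ref{prop1}(c)--(d) you only get $\rho_p(Dv_n)\le\lambda_n^{\pm(p_+-p_-)}(\hat\lambda_1+o(1))$ with $\lambda_n=\|u_n\|_{p(z)}$, and nothing in the contradiction hypothesis prevents $\lambda_n\to0$ or $\lambda_n\to\infty$. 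The bound $p_+-p_-\le1$ controls the \emph{exponent} of $\lambda_n$ but not $\lambda_n$ itself, so it does not rescue the estimate. In short, the ``asymptotic minimizer'' property belongs to the Rayleigh-type \emph{quotient}, which in the anisotropic setting is not scale-invariant, and you have no mechanism to pin the sequence to a fixed modular level.

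The paper sidesteps this entirely: by invoking the existence of an actual principal eigenfunction for the $\vartheta$-weighted problem, it works with a genuine minimizer rather than a minimizing sequence, and the homogeneity issue never arises. (The Remark immediately following the lemma runs a contradiction argument of your flavour, but for the \emph{norm} inequality $\|u\|_*\le C\|u\|$; there the rescaling works because norms are $1$-homogeneous, unlike modulars.) If you want to salvage your strategy, the natural fix is to first establish that the supremum $\sup_{u\ne0}\int_\Omega\vartheta|u|^{p(z)}dz\big/\rho_p(Du)$ is \emph{attained}---but that is exactly the content of the weighted eigenvalue theory the paper cites, so you end up at the paper's proof anyway.
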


\begin{proof}
Consider the eigenvalue problem
$$\left\{\begin{array}{lll}
\di &-\Delta_{p(z)}u=\tilde\lambda \vartheta (z)|u|^{p(z)-2}u&\ \mbox{in}\ \Omega\\
&u=0\ &\mbox{on}\ \partial\Omega.\end{array}\right.
$$

From Fan, Zhang and Zhao \cite{9Fan-Zha-Zha}, we know that this anisotropic eigenvalue problem has a principal eigenvalue $\hat{\lambda}_1>1$ and we have
$$\rho_p(Du)\geq\hat{\lambda}_1\int_\Omega \vartheta (z)|u|^{p(z)}dz\ \mbox{for all}\ u\in W_0^{1,p(z)}(\Omega).$$
Then for all $u\in W_0^{1,p(z)}(\Omega)$, we have
$$\rho_p(Du)-\int_\Omega \vartheta (z)|u|^{p(z)}dz\geq\left(1-\frac{1}{\hat{\lambda}_1} \right)\rho_p(Du)=C_1\rho_p(Du),$$
with $C_1=\frac{\hat{\lambda}_1-1}{\hat{\lambda}_1}\in (0,1)$. This proves the lemma.
\end{proof}

\begin{remark}\label{caracal}
We have a similar inequality for the corresponding norms. Indeed, if
$$\|u\|_*=\inf\left\{\lambda>0;\ \int_\Omega \vartheta (z)\left| \frac{u(z)}{\lambda}\right|^{p(z)}dz\leq 1 \right\},$$
then there exists $C\in (0,1)$ such that $\|u\|_*\leq C\,\|u\|$ for all $u\in W_0^{1,p(z)}(\Omega)$. Arguing by contradiction, suppose we could find $\{u_n\}_{n\in\NN}\subseteq W_0^{1,p(z)}(\Omega)$ such that 
$$\|u_n\|_*>\left(1-\frac 1n\right)\|u_n\|\ \mbox{for all}\ n\in\NN.$$
We may assume that $\|u_n\|=1$ for all $n\in\NN$ and so we can say (at least for a subsequence) that
$$u_n\overset{w}{\to} u\ \mbox{in}\ W_0^{1,p(z)}(\Omega),\quad u_n\rightarrow u\ \mbox{in}\ L^{p(z)}(\Omega).$$
We have
$$\begin{array}{ll}
& \|u\|_*\geq 1\geq \|u\|,\\
\Rightarrow & \displaystyle \hat{\lambda}_1\rho_p(u)\geq \int_\Omega \vartheta (z)|u|^{p(z)}dz\geq \rho_p(Du),\\
\Rightarrow & u=\hat{u}_1\in {\rm int}\, C_+.
\end{array}$$
But then $\hat{\lambda}_1\rho_p(u) > \rho_p(Du)$, contradicting \eqref{eq3}.
\end{remark}

On account of hypothesis $H_0$, the function space on which we will conduct the analysis of problem \eqref{eq1} is
$W_0^{1,p(z)}(\Omega)$. Since $q_+<p_-$, we have $W_0^{1,p(z)}(\Omega) \hookrightarrow  W_0^{1,q(z)}(\Omega)$. 

By a solution of problem \eqref{eq1}, we understand a weak solution, namely a function $u\in W_0^{1,p(z)}(\Omega)$ such that
$$\langle A_{p(z)}(u),h\rangle +\langle A_{q(z)}(u),h\rangle =\int_\Omega \hat{r}(z)|Du|^{\tau(z)-1}hdz+\int_\Omega f(z,u)hdz\ \mbox{for all}\ h\in W_0^{1,p(z)}(\Omega).$$

Eventually, the anisotropic regularity theory will imply that the positive solution of problem \eqref{eq1} is in ${\rm int}\,C_+$.

\section{Analysis of the ``frozen" problem}

As we already explained in the Introduction, we will fix (``freeze") the gradient term in the reaction. So, let $v\in C_0^1(\overline{\Omega})$ and consider the Carath\'eodory function $g_v(z,x)$ defined by
$$
g_v(z,x)=\hat{r}(z)|Dv(z)|^{\tau(z)-1}+f(z,x).
$$

Using this function as forcing (source) term, we consider the following anisotropic Dirichlet problem
\begin{equation}\label{eq13}
  -\Delta_{p(z)} u(z)-\Delta_{q(z)} u(z)=g_v(z,u(z)) \mbox{ in } \Omega,\;u|_{\partial\Omega}=0,\;u\geq0.
\end{equation}

This problem is variational. Setting $G_v(z,x)=\displaystyle{\int_0^x g_v(z,s)ds}$, we consider the $C^1$-functional $\Psi_v:W^{1,p(z)}_0(\Omega)\to\RR$ defined by
$$
\Psi_v(u)=\int_\Omega \frac{1}{p(z)}|Du|^{p(z)}dz+\int_\Omega\frac{1}{q(z)}|Du|^{q(z)}dz-
\int_\Omega G_v(z,u^+)dz
$$
for all $u\in W^{1,p(z)}_0(\Omega)$.

\begin{prop}\label{prop5}
  If hypotheses $H_0$, $H_1$ hold, then the functional $\Psi_v(\cdot)$ is coercive.
\end{prop}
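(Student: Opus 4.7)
The strategy is to bound $\Psi_v(u)$ from below by a positive multiple of $\rho_p(Du)$ minus lower-order terms in $\|u\|$, and then use the modular-norm relation from Proposition~\ref{prop1} to conclude. Since the $q$-modular part is nonnegative, it can be dropped. The only negative contributions come from $-\int_\Omega G_v(z,u^+)\,dz$, which splits into $-\int_\Omega F(z,u^+)\,dz$ (the ``main'' term, asymptotically $p_+$-homogeneous and nonuniformly nonresonant with $\hat\lambda_1$) and $-\int_\Omega \hat r(z)|Dv|^{\tau(z)-1}u^+\,dz$ (a $v$-dependent linear term, since $v$ is frozen in $C_0^1(\overline{\Omega})$).

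First I would combine hypothesis $H_1(ii)$ (the $\limsup$ on $p_+F/x^{p(z)}$) with the local bound $H_1(i)$ to get, for each $\epsilon>0$, a constant $C_\epsilon>0$ with
$$p_+\,F(z,x)\leq(\vartheta(z)+\epsilon)\,x^{p(z)}+C_\epsilon\quad\text{for a.a. }z\in\Omega,\ \text{all }x\geq 0.$$
Using $\frac{1}{p(z)}\geq\frac{1}{p_+}$ and dropping $\rho_q(Du)$, this yields
$$\Psi_v(u)\;\geq\;\frac{1}{p_+}\Bigl[\rho_p(Du)-\int_\Omega\vartheta(z)|u^+|^{p(z)}dz\Bigr]-\frac{\epsilon}{p_+}\rho_p(u^+)-\int_\Omega \hat r(z)|Dv|^{\tau(z)-1}u^+\,dz-\frac{C_\epsilon|\Omega|}{p_+}.$$

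Second, I would apply Lemma~\ref{lem4} to $u^+\in W_0^{1,p(z)}(\Omega)$. This gives $\int_\Omega\vartheta(z)|u^+|^{p(z)}dz\leq(1-C_1)\rho_p(Du^+)\leq(1-C_1)\rho_p(Du)$, and hence the bracket in the previous display is $\geq C_1\rho_p(Du)$. For the leftover $\epsilon$-term, inequality \eqref{eq3} gives $\rho_p(u^+)\leq\rho_p(u)\leq\hat\lambda_1^{-1}\rho_p(Du)$. For the frozen linear term, since $v\in C_0^1(\overline{\Omega})$ the function $\hat r(z)|Dv(z)|^{\tau(z)-1}$ belongs to $L^\infty(\Omega)$, so by H\"older and the Poincar\'e-type embedding $W_0^{1,p(z)}(\Omega)\hookrightarrow L^1(\Omega)$ that integral is bounded by some $c(v)\|u\|$. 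Choosing $\epsilon<C_1\hat\lambda_1$ produces
$$\Psi_v(u)\;\geq\;\frac{1}{p_+}\Bigl(C_1-\frac{\epsilon}{\hat\lambda_1}\Bigr)\rho_p(Du)-c(v)\|u\|-C'$$
with a strictly positive coefficient of $\rho_p(Du)$.

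Finally, since $\|u\|=\|Du\|_{p(z)}$, Proposition~\ref{prop1}(d) yields $\rho_p(Du)\geq\|u\|^{p_-}$ whenever $\|u\|\geq 1$, and $p_-\geq q_-> 1$ by $H_0$; thus the right-hand side tends to $+\infty$ as $\|u\|\to+\infty$, giving coercivity. The only subtle point is the passage from Lemma~\ref{lem4} (stated for $u$) to its use for $u^+$; this is immediate because $u^+\in W_0^{1,p(z)}(\Omega)$, and then one exploits $\rho_p(Du)\geq\rho_p(Du^+)$ to transfer the bound back to the full gradient modular. Everything else is bookkeeping between modular and norm via Proposition~\ref{prop1}.
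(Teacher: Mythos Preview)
Your argument is correct and is in fact a genuinely different, more direct route than the paper's. The paper argues by contradiction: it takes a sequence $\{u_n\}$ with $\|u_n\|\to\infty$ and $\Psi_v(u_n)$ bounded above, uses hypothesis $H_1(iii)$ together with \eqref{eq3} to show that $\{u_n^-\}$ is bounded, then rescales $u_n^+$ by $\rho_p(Du_n^+)^{1/p(z)}$ and passes to the limit to reach a contradiction via Lemma~\ref{lem4}. Your approach instead produces an explicit quantitative lower bound $\Psi_v(u)\geq c_0\,\rho_p(Du)-c(v)\|u\|-C'$ by combining the global estimate $p_+F(z,x)\leq(\vartheta(z)+\varepsilon)x^{p(z)}+C_\varepsilon$ with Lemma~\ref{lem4} (applied to $u^+$, then lifted to $\rho_p(Du)$ via $\rho_p(Du)=\rho_p(Du^+)+\rho_p(Du^-)$) and \eqref{eq3}, and finishes using Proposition~\ref{prop1}(d).

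What each approach buys: your proof is shorter, avoids the somewhat delicate anisotropic rescaling $u_n^+/\rho_p(Du_n^+)^{1/p(z)}$, and does not invoke hypothesis $H_1(iii)$ at all, so it actually shows that $H_1(iii)$ is unnecessary for coercivity of $\Psi_v$. The paper's contradiction scheme is more in the spirit of arguments that work when one only has asymptotic (rather than global) control on $F$; here, since $H_1(i)$ already bounds $F$ on bounded $x$-intervals, the asymptotic condition in $H_1(ii)$ upgrades to a global one and makes your direct estimate possible.
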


\begin{proof}
  We proceed indirectly. So, suppose that $\Psi_v(\cdot)$ is not coercive. Then we can find $\{u_n\}_{n\in\NN}\subseteq W^{1,p(z)}_0(\Omega)$ such that
\begin{equation}\label{eq14}
  \|u_n\|\to\infty \mbox{ as } n\to\infty \mbox{ and } \Psi_v(u_n)\leq C_2 \mbox{ for some }C_2>0, \mbox{ all }n\in\NN.
\end{equation}

From \eqref{eq13} we have
\begin{eqnarray}\nonumber
  && \int_\Omega\frac{1}{p(z)}|Du_n|^{p(z)}dz-\int_\Omega F(z,u_n^+)dz\leq C_3 \mbox{ for some }C_3>0, \mbox{ all }n\in\NN, \\ \nonumber
   &\Rightarrow& \frac{1}{p_+}\int_\Omega |Du_n|^{p(z)}dz-\int_\Omega F(z,u_n^+)dz\leq C_3 \mbox{ for all }n\in\NN, \\ \nonumber
  &\Rightarrow& \frac{1}{p_+}\int_\Omega \hat{\lambda}_1(u_n^+)^{p(z)}dz-\int_\Omega F(z,u_n^+)dz+\frac{1}{p_+}\rho_p(Du_n^-)\leq C_3 \mbox{ for all }n\in\NN\ \mbox{(see \eqref{eq3}),} \\ \nonumber
  &\Rightarrow& \frac{1}{p_+}\int_\Omega \left[\hat{\lambda}_1(u_n^+)^{p(z)}-p_+F(z,u_n^+)\right]dz+ \frac{1}{p_+}\rho_p(Du_n^-)\leq C_3 \mbox{ for all }n\in\NN, \\ \nonumber
  &\Rightarrow& \rho_p(Du_n^-)\leq C_4 \mbox{ for some }C_4>0, \mbox{ all }n\in\NN \mbox{ (see hypotesis $H_1(iii)$), } \\
  &\Rightarrow& \{u_n^-\}_{n\in\NN}\subseteq W^{1,p(z)}_0(\Omega) \mbox{ is bounded (see Proposition \ref{prop1}). } \label{eq15}
\end{eqnarray}

From \eqref{eq14} and \eqref{eq15} it follows that
\begin{eqnarray}\nonumber
  && \|u_n^+\|\to\infty, \\
  &\Rightarrow& \rho_p(Du_n^+)\to\infty \mbox{ (see Proposition \ref{prop1}).} \label{eq16}
\end{eqnarray}

We set $y_n=\frac{u_n^+}{\rho_p(Du_n^+)^{1/p(z)}}\in W^{1,p(z)}_0(\Omega)$. From proof of Lemma \ref{lem4}, we have that
\begin{eqnarray}
  && \left\{\rho_p(Dy_n)\right\}_{n\in\NN} \mbox{ is bounded, } \label{eq17} \\ \nonumber
  &\Rightarrow& \{y_n\}_{n\in \NN}\subseteq W^{1,p(z)}_0(\Omega) \mbox{ is bounded, $y_n\geq 0$ for all }n\in\NN.
\end{eqnarray}

We may assume that
\begin{equation}\label{eq18}
  y_n\overset{w}{\to}y \mbox{ in }W^{1,p(z)}_0(\Omega) \mbox{ and } y_n\to y \mbox{ in } L^{p(z)}(\Omega), \ y\geq0.
\end{equation}

From \eqref{eq14} and \eqref{eq15}, we have
\begin{eqnarray} \nonumber
  && \frac{1}{p_+}\left[\rho_p(Du_n^+)-\int_\Omega p_+ F(z,u_n^+)dz\right]\leq C_5 \mbox{ for some } C_5>0, \mbox{ all }n\in \NN, \\ \nonumber
  &\Rightarrow& \frac{1}{p_+}\left[\rho_p(Dy_n)-\hat{C}_n-\int_\Omega \frac{p_+F(z,u_n^+)}{\rho_p(Du_n^+)}dz\right]\leq \frac{C_5}{\rho_p(Du_n^+)} \mbox{ for all }n\in\NN \\ \nonumber
   &\Rightarrow& \rho_p(Dy_n)\leq \frac{p_+C_5}{\rho_p(Du_n^+)}+\int_\Omega \frac{p_+F(z,u_n^+)}{\rho_p(Du_n^+)}dz +\hat{C}_n \mbox{ for all } n\in \NN. \label{eq19}
\end{eqnarray}

From the sequential weak lower semicontinuity of the modular function $\rho_p(\cdot)$ (it is continuous and convex), we have
\begin{equation}\label{eq20}
  \rho_p(Dy)\leq \liminf_{n\to\infty} \rho_p(Dy_n) \mbox{ (see \eqref{eq17}). }
\end{equation}

Also, from \eqref{eq16} we have
\begin{equation}\label{eq21}
  \frac{C_5}{\rho_p(Du_n^+)}\to 0 \mbox{ as } n\to \infty.
\end{equation}

Moreover, note that
\begin{eqnarray*}
  \frac{p_+F(z,u_n^+)}{\rho_p(Du_n^+)} &=& \frac{p_+F(z,u_n^+)}{(u_n^+)^{p(z)}}
\left[\frac{u_n^+}{\rho_p(Du_n^+)^{1/p(z)}}\right]^{p(z)} \\
   &=& \frac{p_+F(z,u_n^+)}{(u_n^+)^{p(z)}}y_n^{p(z)} \mbox{ for all }n\in \NN.
\end{eqnarray*}

From hypothesis $H_1(ii)$, we have
\begin{eqnarray}\nonumber
  && \limsup_{n\to\infty}\frac{p_+F(z,u_n^+(z))}{(u_n^+)^{p(z)}}\leq \vartheta(z)  \mbox{ for a.a. }z\in \{y>0\},\\
  &\Rightarrow& \limsup_{n\to\infty}\int_\Omega \frac{p_+F(z,u_n^+)}{(u_n^+)^{p(z)}}y_n^{p(z)}dz\leq \int_\Omega \vartheta(z) y^{p(z)}dz \label{eq22} \\ \nonumber
 && \mbox{ (by Fatou's lemma).}
\end{eqnarray}

We return to \eqref{eq19}, pass to the limit as $n\to\infty$ and use \eqref{eq20}, \eqref{eq21} and \eqref{eq22}. We obtain
\begin{eqnarray*}
  && \rho_p(Dy)-\int_\Omega \vartheta(z)y^{p(z)}dz\leq0, \\
  &\Rightarrow& C_1\rho_p(Dy)\leq 0 \mbox{ (see Lemma 4),} \\
  &\Rightarrow& y=0 \mbox{ (by Poincar\'e's inequality).}
\end{eqnarray*}

Then from \eqref{eq17} and \eqref{eq18}, we obtain
\begin{equation}\label{eq23}
  \rho_p(Dy_n)\to0.
\end{equation}

But using \eqref{eq16} we see that
\begin{equation}\label{eq24}
  \rho_p(Dy_n)\geq C_6>0 \mbox{ for all }n\geq n_0.
\end{equation}

Comparing \eqref{eq23} and \eqref{eq24} we have a contradiction.
\end{proof}

Let $S_v^+$ denote the set of positive solutions of problem \eqref{eq13} (the ``frozen problem").

\begin{prop}\label{prop6}
  If hypotheses $H_0$, $H_1$ hold, then $\emptyset\not=S_v^+\subseteq{\rm int}\,C_+$.
\end{prop}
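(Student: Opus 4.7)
My plan is to obtain $u_0 \in S_v^+$ as a global minimizer of $\Psi_v$ and then upgrade its regularity to conclude $u_0 \in {\rm int}\,C_+$.

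By Proposition \ref{prop5}, $\Psi_v$ is coercive on $W_0^{1,p(z)}(\Omega)$. The two gradient modulars are convex and continuous, hence sequentially weakly lower semicontinuous, while the local bound in $H_1(i)$, the polynomial growth implied by $H_1(ii)$, the boundedness of $|Dv|^{\tau(z)-1}$ (since $v \in C_0^1(\overline{\Omega})$), and the compact embedding $W_0^{1,p(z)}(\Omega) \hookrightarrow L^{p(z)}(\Omega)$ together make $u \mapsto \intom G_v(z,u^+)\,dz$ sequentially weakly continuous. The direct method yields a global minimizer $u_0 \in W_0^{1,p(z)}(\Omega)$ with $\Psi_v'(u_0) = 0$.

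To exclude $u_0 = 0$, I would evaluate $\Psi_v$ at $t\hat u_1$, where $\hat u_1 \in {\rm int}\,C_+$ is the principal eigenfunction from the remark after $H_0$. For $t \in (0,1)$ so small that $t\hat u_1 \le \delta$ pointwise, hypothesis $H_1(iv)$ gives $F(z, t\hat u_1) \ge \frac{C_0}{\mu_+}(t\hat u_1)^{\mu(z)}$, whence $\intom F(z,t\hat u_1)\,dz \ge c_4\,t^{\mu_+}$ for some $c_4 > 0$. Proposition \ref{prop1}(c) bounds the $p$- and $q$-modular terms of $D(t\hat u_1)$ by $c_1 t^{p_-}$ and $c_2 t^{q_-}$ respectively, and the convection piece contributes $-c_3 t$ with $c_3 \ge 0$. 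Therefore
$$\Psi_v(t\hat u_1) \le c_1 t^{p_-} + c_2 t^{q_-} - c_3 t - c_4 t^{\mu_+}.$$
Since $\mu \in E_1$ combined with $H_1(iv)$ gives $1 < \mu_+ < q_- \le p_-$, the term $-c_4 t^{\mu_+}$ strictly dominates $c_1 t^{p_-} + c_2 t^{q_-}$ as $t \to 0^+$, so $\Psi_v(t\hat u_1) < 0 = \Psi_v(0)$, and therefore $u_0 \neq 0$.

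Next, to verify $u_0 \ge 0$, I would test $\Psi_v'(u_0) = 0$ with $h = -u_0^-$. The $A$-operator terms contribute $\rho_p(Du_0^-) + \rho_q(Du_0^-)$, while $f(z,0) = 0$ and $u_0^+ u_0^- = 0$ collapse the reaction part $\intom g_v(z,u_0^+)(-u_0^-)\,dz$ to $\int_{\{u_0 < 0\}} \hat r(z) |Dv|^{\tau(z)-1} u_0\,dz \le 0$. Thus $\rho_p(Du_0^-) + \rho_q(Du_0^-) \le 0$, forcing $u_0^- = 0$ and hence $u_0 \ge 0$, so $u_0$ weakly solves \eqref{eq13} and $u_0 \in S_v^+ \ne \emptyset$.

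Finally, for any $u \in S_v^+$, anisotropic Moser iteration applied to \eqref{eq13} gives $u \in L^\infty(\Omega)$, and Fan's anisotropic nonlinear regularity then upgrades this to $u \in C_0^{1,\alpha}(\overline{\Omega})$. Since $-\Delta_{p(z)} u - \Delta_{q(z)} u = g_v(z,u) \ge 0$ with $u \not\equiv 0$, the anisotropic strong maximum principle and Hopf boundary-point lemma give $u(z) > 0$ in $\Omega$ and $\partial u/\partial n|_{\partial\Omega} < 0$, i.e., $u \in {\rm int}\,C_+$. The main obstacle is the nontriviality step: one must use the strict separation $\mu_+ < q_-$ together with $\mu_+ > 1$ (coming from $\mu \in E_1$) so that the concave lower bound of $H_1(iv)$ genuinely dominates every higher-order positive modular in the expansion of $\Psi_v(t\hat u_1)$ near $t = 0$, despite the linear convection contribution.
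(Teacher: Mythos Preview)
Your proof is correct and follows essentially the same route as the paper: minimize $\Psi_v$ via coercivity and weak lower semicontinuity, rule out $u_0=0$ by testing along $t\hat u_1$ for small $t$ using $H_1(iv)$ and $\mu_+<q_-$, eliminate the negative part by testing with $-u_0^-$, and then invoke anisotropic $L^\infty$ regularity, $C^{1,\alpha}$ regularity, and the strong maximum principle. One remark: your closing concern about the ``linear convection contribution'' and the need for $\mu_+>1$ is unnecessary, since $\hat r\ge 0$ forces $c_3\ge 0$, so the term $-c_3 t$ already has the favorable sign and does not compete with $-c_4 t^{\mu_+}$; the only inequality that matters in the nontriviality step is $\mu_+<q_-$.
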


\begin{proof}
  From Proposition \ref{prop5}, we know that $\Psi_v(\cdot)$ is coercive. Also it is sequentially weakly lower semicontinuous. Therefore by the Weierstrass-Tonelli theorem, we can find $u_0\in W^{1,p(z)}_0(\Omega)$ such that
\begin{equation}\label{eq25}
  \Psi_v(u_0)=\min\left\{\Psi_v(u):\:u\in W^{1,p(z)}_0(\Omega)\right\}.
\end{equation}

Let $u\in{\rm int}\,C_+$ and let $\delta>0$ be as postulated by hypothesis $H_1(iv)$. We choose $t\in (0,1)$ small such that
$$
0\leq tu(z)\leq \delta \mbox{ for all }z\in \overline{\Omega}.
$$

We have
\begin{eqnarray*}
  \Psi_t(tu) &\leq& \frac{t^{q_-}}{q_-}\left[\rho_p(Du)+\rho_q(Du)\right]-
\frac{t^{\mu_+}}{\mu_+}\rho_\mu(u) \\
  && \mbox{ (see hypothesis $H_1(iv)$ and recall that $t\in(0,1)$).}
\end{eqnarray*}

Since $t\in(0,1)$ and $\mu_+<q_-<p_-$, by choosing $t\in (0,1)$ even smaller, we have
\begin{eqnarray*}
  && \Psi_v(tu)<0, \\
  &\Rightarrow& \Psi_v(u_0)<0=\Psi_v(0) \mbox{ (see \eqref{eq25}), } \\
  &\Rightarrow& u_0\not=0.
\end{eqnarray*}

From \eqref{eq25} we have
$$
\Psi'_v(u_0)=0,
$$
\begin{equation}\label{eq26}
  \Rightarrow\langle A_{p(z)}(u_0),h\rangle+\langle A_{q(z)}(u_0),h\rangle=\int_\Omega g_v(z,u_0^+)hdz \mbox{ for all }h\in W^{1,p(z)}_0(\Omega).
\end{equation}

In \eqref{eq26} we choose $h=-u_0^-\in W^{1,p(z)}_0(\Omega)$. We obtain
\begin{eqnarray*}
  && \rho_p(Du_0^-)+\rho_q(Du_0^-)\leq0, \\
  &\Rightarrow& u_0\geq0,\; u_0\not=0.
\end{eqnarray*}

From \eqref{eq26} it follows that $u_0\in S_v^+\not=\emptyset$.

Then Theorem 4.1 of Fan and Zhao \cite{10Fan-Zha} (see also Gasi\'nski and Papageorgiou \cite[Proposition 3.1]{15Gas-Pap}  and Papageorgiou, R\u adulescu and Zhang \cite[Proposition A1]{23Pap-Rad-Zha}), we have that $u_0\in L^\infty(\Omega)$. Then Corollary 3.1 of Tan and Fang \cite{29Tan-Fan} implies that $u_0\in C_+\setminus\{0\}$. Finally, the anisotropic maximum principle of Papageorgiou, R\u adulescu and Zhang \cite[Proposition A2]{23Pap-Rad-Zha} (see also Zhang \cite{31Zha}), implies that $u_0\in {\rm int}\,C_+$.

We conclude that $\emptyset\not=S_v^+\subseteq{\rm int}\,C_+$.
\end{proof}

Hypotheses $H_1(i),(ii)$ imply that
$$
0\leq f(z,x)\leq C_7\left[1+x^{p(z)-1}\right] \mbox{ for a.a. }z\in \Omega, \mbox{ all }x\geq0, \mbox{ some }C_7>0.
$$

This growth condition combined with hypothesis $H_1(iv)$, imply that given $r\in(p_+,p_-^*)$, we can find $C_8=C_8(r)>0$ such that
\begin{eqnarray}\nonumber
  && f(z,x)\geq C_1x^{\mu(z)-1}-C_8 x^{r-1} \mbox{ for a.a. }z\in \Omega, \mbox{ all }x\geq0, \\
  &\Rightarrow& g_v(z,x)\geq C_1 x^{\mu(z)-1}-C_8 x^{r-1} \mbox{ for a.a. }z\in\Omega, \mbox{ all }x\geq0 \label{eq27} \\ \nonumber
  && \mbox{ (see hypotheses $H_0$). }
\end{eqnarray}

This unilateral growth condition on $g_v(z,\cdot)$ leads to the following auxiliary Dirichlet problem
\begin{equation}\label{eq28}
    \left\{
\begin{array}{lll}
-\Delta_{p(z)} u(z)-\Delta_{q(z)}u(z)=C_1 u(z)^{\mu(z)-1}-C_8u(z)^{r-1} \text{ in } \Omega,\\
u|_{\partial\Omega}=0,\;u\geq0.
\end{array}
\right.
\end{equation}

\begin{prop}\label{prop7}
  If hypotheses $H_0$ hold, then problem \eqref{eq28} admits a unique positive solution
$$
\overline{u}\in {\rm int}\,C_+.
$$
\end{prop}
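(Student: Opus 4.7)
The plan is to study problem \eqref{eq28} by a direct variational method. Introduce the $C^1$-functional $\varphi:W_0^{1,p(z)}(\Omega)\to\RR$ defined by
$$\varphi(u)=\intom\frac{|Du|^{p(z)}}{p(z)}\,dz+\intom\frac{|Du|^{q(z)}}{q(z)}\,dz-\intom\frac{C_1}{\mu(z)}(u^+)^{\mu(z)}\,dz+\intom\frac{C_8}{r}(u^+)^r\,dz.$$
The $r$-term ($r>p_+$) is nonnegative and the concave $\mu$-term is of order at most $\mu_+<q_-<p_-$, so by Proposition \ref{prop1} together with the anisotropic Sobolev embedding, the $p(z)$-kinetic energy dominates and $\varphi$ is coercive. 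Sequential weak lower semicontinuity is standard. By the Weierstrass-Tonelli theorem there exists a global minimizer $\overline{u}\in W_0^{1,p(z)}(\Omega)$.

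To verify that $\overline{u}\neq 0$, fix any $\tilde{u}\in{\rm int}\,C_+$ with $\tilde{u}\leq 1$ pointwise and note that for $t\in(0,1)$ small,
$$\varphi(t\tilde{u})\leq \frac{t^{q_-}}{q_-}\bigl[\rho_p(D\tilde{u})+\rho_q(D\tilde{u})\bigr]-\frac{C_1\,t^{\mu_+}}{\mu_+}\intom\tilde{u}^{\mu(z)}\,dz+\frac{C_8\,t^r}{r}\intom\tilde{u}^r\,dz;$$
since $\mu_+<q_-<r$, the middle term dominates as $t\to 0^+$, so $\varphi(t\tilde{u})<0=\varphi(0)$ and hence $\varphi(\overline{u})<0$. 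Testing the Euler equation $\varphi'(\overline{u})=0$ with $h=-\overline{u}^-\in W_0^{1,p(z)}(\Omega)$ yields $\rho_p(D\overline{u}^-)+\rho_q(D\overline{u}^-)\leq 0$, so $\overline{u}\geq 0$, and $\overline{u}$ is a nontrivial nonnegative weak solution of \eqref{eq28}. The regularity chain used in the proof of Proposition \ref{prop6} now applies verbatim: $\overline{u}\in L^\infty(\Omega)$ by Fan-Zhao \cite{10Fan-Zha}, then $\overline{u}\in C_+\setminus\{0\}$ by Tan-Fang \cite{29Tan-Fan}, and finally $\overline{u}\in{\rm int}\,C_+$ by the anisotropic maximum principle \cite[Proposition A2]{23Pap-Rad-Zha}.

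The main obstacle is uniqueness. The crucial structural property is that for each $z\in\Omega$ the quotient
$$t\longmapsto \frac{C_1\,t^{\mu(z)-1}-C_8\,t^{r-1}}{t^{p(z)-1}}=C_1\,t^{\mu(z)-p(z)}-C_8\,t^{r-p(z)}$$
is strictly decreasing on $(0,+\infty)$ (since $\mu(z)<p(z)<r$), and the analogous statement holds with $p(z)$ replaced by $q(z)$. Given two positive solutions $\overline{u}_1,\overline{u}_2\in{\rm int}\,C_+$, a D\'iaz-Saa / Picone-type argument -- writing the weak formulations tested against ratios such as $(\overline{u}_1^{p(z)}-\overline{u}_2^{p(z)})/\overline{u}_1^{p(z)-1}$ together with its $q(z)$-analogue, symmetrizing in $\overline{u}_1\leftrightarrow \overline{u}_2$, and exploiting the strict monotonicity of $A_{p(z)}$ and $A_{q(z)}$ furnished by Proposition \ref{prop2} -- forces the identity $\overline{u}_1\equiv \overline{u}_2$. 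The technical burden is to push the Picone-style manipulations through the variable exponents $p(z),q(z)$, but the strict sign of the decreasing quotient above is precisely what makes the comparison close.
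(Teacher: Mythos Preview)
Your existence argument is essentially identical to the paper's: the same energy functional, the same coercivity/weak lower semicontinuity reasoning, the same ``$t$ small'' trick to rule out the trivial minimizer, and the same regularity chain.

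The uniqueness part, however, is only a sketch and the concrete plan you outline does not go through as written. Testing against ratios such as $(\overline{u}_1^{p(z)}-\overline{u}_2^{p(z)})/\overline{u}_1^{p(z)-1}$ with a \emph{variable} exponent in the power is precisely where the anisotropic Picone/D\'iaz--Saa machinery breaks: one cannot differentiate $t\mapsto t^{p(z)}$ cleanly in $z$, the resulting functions need not lie in $W_0^{1,p(z)}(\Omega)$, and there is no algebraic identity that simultaneously handles the $p(z)$-- and $q(z)$--operators when the homogeneity exponent itself varies. Your closing sentence (``the technical burden is to push the Picone-style manipulations through the variable exponents'') is exactly the point where the argument is missing, not merely routine.

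The paper resolves this by choosing a single \emph{constant} exponent, namely $\mu_+$, and invoking the Tak\'a\v{c}--Giacomoni convexity theorem \cite{28Tak-Gia}: the functional
\[
j(u)=\intom\frac{1}{p(z)}\bigl|Du^{1/\mu_+}\bigr|^{p(z)}dz+\intom\frac{1}{q(z)}\bigl|Du^{1/\mu_+}\bigr|^{q(z)}dz
\]
is convex because $\mu_+<q_-\leq q(z)<p(z)$, hence $j'$ is monotone. One then tests with $h=\overline{u}^{\mu_+}-\overline{v}^{\mu_+}$ (admissible since $\overline{u},\overline{v}\in{\rm int}\,C_+$ and $\overline{u}/\overline{v},\overline{v}/\overline{u}\in L^\infty$) and the strict decrease of $t\mapsto C_1 t^{\mu(z)-\mu_+}-C_8 t^{r-\mu_+}$ closes the comparison. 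The key insight you are missing is that the homogeneity exponent in the D\'iaz--Saa substitution must be a constant below $q_-$; once that is fixed, the variable-exponent difficulty disappears and \cite{28Tak-Gia} supplies the needed convexity.
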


\begin{proof}
  First we show the existence of a positive solution. To this end we consider the $C^1$-functional $\tau_0:W^{1,p(z)}_0(\Omega)\to \RR$ defined by
$$
\tau_0(u)=\int_\Omega \frac{1}{p(z)}|Du|^{p(z)}dz+\int_\Omega \frac{1}{q(z)}|Du|^{q(z)}dz+\frac{C_8}{r}\|u^+\|_r^r-\int_\Omega \frac{C_1}{\mu(z)}(u^+)^{\mu(z)}dz
$$
for all $u\in W^{1,p(z)}_0(\Omega)$.

Since $\mu_+<q_-$, we see that $\tau_0(\cdot)$ is coercive. Also it is sequentially weakly lower semicontinuous. So, we can find $\overline{u}\in W^{1,p(z)}_0(\Omega)$ such that
\begin{eqnarray}
  &&\tau_0(\overline{u})=\min\left\{\tau(u):\:u\in W^{1,p(z)}_0(\Omega)\right\}<0=\tau_0(0) \label{eq29}\\ \nonumber
  &&\mbox{ (recall that $\mu_+<q_-<p_+<r$), }\\ \nonumber
  &\Rightarrow& \overline{u}\not=0.
\end{eqnarray}

From \eqref{eq29} we have
$$
\tau_0'(\overline{u})=0,
$$
\begin{equation}\label{eq30}
  \Rightarrow\langle A_{p(z)}(\overline{u}),h\rangle+\langle A_{q(z)}(\overline{u}),h\rangle=\int_\Omega C_1(\overline{u}^+)^{\mu(z)-1}hdz-\int_\Omega C_8(\overline{u}^+)^{r_+}hdz
\end{equation}
for all $h\in W^{1,p(z)}_0(\Omega)$.

In \eqref{eq30} we use $h=-\overline{u}^-\in W^{1,p(z)}_0(\Omega)$ and obtain
\begin{eqnarray*}
   && \rho_p(D\overline{u}^-)+\rho_q(D\overline{u}^-)=0, \\
  &\Rightarrow& \overline{u}\geq0,\ \overline{u}\not=0.
\end{eqnarray*}

It follows that $\overline{u}\in W^{1,p(z)}_0(\Omega)$ is a positive solution of \eqref{eq28}. As before (see the proof of Proposition \ref{prop6}), the anisotropic regularity theory and the maximum principle imply that $\overline{u}\in{\rm int}\,C_+$.

Next, we show the uniqueness of this positive solution of problem \eqref{eq28}. To this end, we introduce the integral functional $j:L^1(\Omega)\to \overline{\RR}=\RR\cup \{+\infty\}$ defined by
$$
j(u)=\left\{
       \begin{array}{ll}
         \int_\Omega \frac{1}{p(z)}\left|Du^{1/\mu_+}\right|dz +\int_\Omega \frac{1}{q(z)} \left|Du^{1/\mu_+}\right|dz, & \hbox{ if } u\geq0,\;u^{1/\mu_+}\in W^{1,p(z)}_0(\Omega), \\
         +\infty, & \hbox{ otherwise.}
       \end{array}
     \right.
$$

We set ${\rm dom}\,j=\left\{u\in L^1(\Omega):\:j(u)<+\infty\right\}$ (the effective domain of $j(\cdot)$). Theorem 2.2 of Taka\v c and Giacomoni \cite{28Tak-Gia}, implies that $j(\cdot)$ is convex.

Suppose that $\overline{v}\in W^{1,p(z)}_0(\Omega)$ is another positive solution of problem \eqref{eq28}. Again we have $\overline{v}\in{\rm int}\,C_+$. Then using Proposition 4.1.22 of Papageorgiou, R\u adulescu and Repov\v s \cite[p. 274]{21Pap-Rad-Rep}, we have
$$
\frac{\overline{u}}{\overline{v}},\ \frac{\overline{v}}{\overline{u}}\in L^\infty(\Omega).
$$

Let $h= \overline{u}^{\mu_+}-\overline{v}^{\mu_+}\in W^{1,p(z)}_0(\Omega)$. For $|t|<1$ small, we have
$$
\overline{u}^{\mu_+}+th \in {\rm dom}\,j,\ \overline{v}^{\mu_+}+th\in {\rm dom}\,j.
$$

Then the convexity of $j(\cdot)$ implies the G\^ateaux differentiability of $j(\cdot)$ at $\overline{u}^{\mu_+}$ and at $\overline{v}^{\mu_+}$ in the direction $h$. Moreover, using Green's identity we obtain
\begin{eqnarray*}
  j'(\overline{u}^{\mu_+})(h) &=& \frac{1}{\mu_+}\int_\Omega \frac{-\Delta_{p(z)}\overline{u}-\Delta_{q(z)}\overline{u}}{\overline{u}^{\mu_+-1}}hdz \\
   &=& \frac{1}{\mu_+}\int_\Omega\left[\frac{C_1}{\overline{u}^{\mu_+-\mu(z)}}
-C_8 \overline{u}^{r-\mu_+}\right]hdz \\
  j'(\overline{v}^{\mu_+})(h) &=& \frac{1}{\mu_+}\int_\Omega \frac{-\Delta_{p(z)}\overline{v}-\Delta_{q(z)}\overline{v}}{
\overline{v}^{\mu_+-1}}hdz \\
   &=& \frac{1}{\mu_+}\int_\Omega \left[\frac{C_1}{\overline{v}^{\mu_+-\mu(z)}}-C_8\overline{v}^{r-\mu_+}
\right]hdz.
\end{eqnarray*}

The convexity of $j(\cdot)$ implies the monotonicity of $j'(\cdot)$. So, we have
\begin{eqnarray*}
   && 0\leq \int_\Omega \left[\frac{1}{\overline{u}^{\mu_+-\mu(z)}}-
\frac{1}{\overline{v}^{\mu_+-\mu(z)}}\right](\overline{u}^{\mu_+}-
\overline{v}^{\mu_+})dz
 \\
   &&+ \int_\Omega C_8\left[\overline{v}^{r-\mu_+}-\overline{u}^{r-\mu_+}\right]
(\overline{u}^{\mu_+}-\overline{v}^{\mu_+})dz\leq 0, \\
  &\Rightarrow& \overline{u}=\overline{v}.
\end{eqnarray*}

Therefore $\overline{u}\in {\rm int}\,C_+$ is the unique positive solution of problem \eqref{eq28}.
\end{proof}

\begin{prop}\label{prop8}
  If hypotheses $H_0$, $H_1$ hold, then $\overline{u}\leq u$ for all $u\in S_v^+$.
\end{prop}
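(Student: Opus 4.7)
The plan is to exploit the lower bound \eqref{eq27}, $g_v(z,x)\geq C_1 x^{\mu(z)-1}-C_8 x^{r-1}$ for $x\geq 0$, which shows that any $u\in S_v^+$ is a weak supersolution of the auxiliary problem \eqref{eq28}. Combined with the uniqueness statement of Proposition \ref{prop7}, a truncation-plus-comparison argument should then pin $\overline{u}$ between $0$ and $u$.

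Fix $u\in S_v^+\subseteq{\rm int}\,C_+$ and introduce the truncated Carath\'eodory function
$$
\hat{e}(z,x)=\begin{cases} 0 & \mbox{if } x<0,\\ C_1 x^{\mu(z)-1}-C_8 x^{r-1} & \mbox{if } 0\leq x\leq u(z),\\ C_1 u(z)^{\mu(z)-1}-C_8 u(z)^{r-1} & \mbox{if } x>u(z),\end{cases}
$$
its primitive $\hat{E}(z,x)=\int_0^x \hat{e}(z,s)\,ds$, and the $C^1$ functional
$$
\hat{\tau}(w)=\int_\Omega \frac{|Dw|^{p(z)}}{p(z)}\,dz+\int_\Omega \frac{|Dw|^{q(z)}}{q(z)}\,dz-\int_\Omega \hat{E}(z,w)\,dz.
$$
Since $u\in L^\infty(\Omega)$, the truncation $\hat{e}$ is globally bounded, so $\hat{\tau}$ is coercive and sequentially weakly lower semicontinuous and thus admits a global minimizer $\tilde{u}$. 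Arguing as in Proposition \ref{prop7} (testing $\hat{\tau}'(\tilde{u})=0$ with $-\tilde{u}^-$, and evaluating $\hat{\tau}$ on $tw$ for some $w\in{\rm int}\,C_+$ with $tw\leq u$ and $t>0$ sufficiently small, exploiting $\mu_+<q_-$), one concludes $\tilde{u}\geq 0$ and $\tilde{u}\neq 0$.

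The core step is the comparison $\tilde{u}\leq u$. The weak form of \eqref{eq13} combined with \eqref{eq27} gives
$$
\langle A_{p(z)}(u),h\rangle+\langle A_{q(z)}(u),h\rangle \geq \int_\Omega \left[C_1 u^{\mu(z)-1}-C_8 u^{r-1}\right]h\,dz
$$
for every nonnegative $h\in W^{1,p(z)}_0(\Omega)$; in other words, $u$ is a weak supersolution of \eqref{eq28}. Subtracting this inequality from the Euler--Lagrange identity $\hat{\tau}'(\tilde{u})=0$ against the test function $h=(\tilde{u}-u)^+$, and noting that on its support one has $\hat{e}(z,\tilde{u})=C_1 u^{\mu(z)-1}-C_8 u^{r-1}$ by construction, the reaction contributions cancel and we are left with
$$
\langle A_{p(z)}(\tilde{u})-A_{p(z)}(u),(\tilde{u}-u)^+\rangle+\langle A_{q(z)}(\tilde{u})-A_{q(z)}(u),(\tilde{u}-u)^+\rangle \leq 0.
$$
Strict monotonicity of $A_{p(z)}$ and $A_{q(z)}$ (Proposition \ref{prop2}) then forces $(\tilde{u}-u)^+=0$, that is, $\tilde{u}\leq u$.

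Since $0\leq \tilde{u}\leq u$, the truncation is inactive, so $\tilde{u}$ solves \eqref{eq28} weakly. The uniqueness part of Proposition \ref{prop7} gives $\tilde{u}=\overline{u}$, hence $\overline{u}\leq u$. The principal delicacy I anticipate is engineering the truncation so that the reaction contribution on $\{\tilde{u}>u\}$ cancels exactly after subtracting the supersolution inequality; the piecewise definition above, which caps $\hat{e}(z,\cdot)$ at its value at $u(z)$ for $x>u(z)$, is precisely what achieves this and isolates the strict monotonicity of the anisotropic operators.
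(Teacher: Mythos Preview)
Your proof is correct and follows essentially the same route as the paper's: the truncation $\hat{e}$ coincides with the paper's $k(z,x)$, the minimization argument and the nontriviality check via $\mu_+<q_-$ are the same, and the comparison step testing with $(\tilde u-u)^+$ and invoking strict monotonicity of $A_{p(z)}$, $A_{q(z)}$ is exactly what the paper does (your framing of $u$ as a supersolution of \eqref{eq28} is just a clean restatement of the paper's chain of inequalities). The identification $\tilde u=\overline u$ via Proposition~\ref{prop7} then closes the argument in both cases.
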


\begin{proof}
  Let $u\in S_v^+$. We introduce the Carath\'eodory function $k(z,x)$ defined by
\begin{equation}\label{eq31}
  k(z,x)=\left\{
           \begin{array}{ll}
             C_1(x^+)^{\mu(z)-1}-C_8(x^+)^{r-1}, & \hbox{ if } x\leq u(z) \\
             C_1u(z)^{\mu(z)-1}-C_8u(z)^{r-1}, & \hbox{ if } u(z)<x.
           \end{array}
         \right.
\end{equation}

We set $K(z,x)=\displaystyle{\int_0^x k(z,s)ds}$ and consider the $C^1$-functional $\hat{\tau}: W^{1,p(z)}_0(\Omega)\to\RR$ defined by
$$
\hat{\tau}(u)=\int_\Omega \frac{1}{p(z)}|Du|^{p(z)}dz+\int_\Omega \frac{1}{q(z)}|Du|^{q(z)}dz-\int_\Omega K(z,u)dz
$$
for all $u\in W^{1,p(z)}_0(\Omega)$.

From \eqref{eq31} and Poincar\'e's inequality, we see that $\hat{\tau}(\cdot)$ is coercive. Also, it is sequentially weakly lower semicontinuous. So, we can find $\hat{u}\in W^{1,p(z)}_0(\Omega)$ such that
\begin{eqnarray}
  && \hat{\tau}(\hat{u})=\min\left\{\hat{\tau}(u):\:u\in W^{1,p(z)}_0(\Omega)\right\}<0=\hat{\tau}(0) \label{eq32}\\ \nonumber
  && \mbox{ (as before, since $\mu_+<q_-<p_+<r$), } \\ \nonumber
  &\Rightarrow& \hat{u}\not=0.
\end{eqnarray}

From \eqref{eq32} we have
$$
\hat{\tau}'(\hat{u})=0,
$$
\begin{equation}\label{eq33}
  \Rightarrow \langle A_{p(z)}(\hat{u}),h\rangle+\langle A_{q(z)}(\hat{u}),h\rangle=\int_\Omega k(z,\hat{u})hdz \mbox{ for all }h\in W^{1,p(z)}_0(\Omega).
\end{equation}

In \eqref{eq33} first we choose $h=-\hat{u}^-\in W^{1,p(z)}_0(\Omega)$ and obtain $\hat{u}\geq0$, $\hat{u}\not=0$. Next, in \eqref{eq33} we use $h=(\hat{u}-u)^+\in W^{1,p(z)}_0(\Omega)$. We have
\begin{eqnarray*}
  && \langle A_{p(z)}(\hat{u}),(\hat{u}-u)^+\rangle+\langle A_{q(z)}(\hat{u}), (\hat{u}-u)^+\rangle \\
  &=& \int_\Omega\left[C_1 u^{\mu(z)-1}-C_8u^{r-1}\right](\hat{u}-u)^+dz \mbox{ (see \eqref{eq31}) } \\
  &\leq& \int_\Omega g_v(z,u)(\hat{u}-u)^+dz \mbox{ (see \eqref{eq27}) } \\
  &=& \langle A_{p(z)}(u), (\hat{u}-u)^+\rangle+\langle A_{q(z)}, (\hat{u}-u)^+\rangle \mbox{ (since $u\in S_v^+$), } \\
  \Rightarrow&& \hat{u}\leq u.
\end{eqnarray*}

So, we have proved that
\begin{equation}\label{eq34}
  \hat{u}\in[0,u],\ \hat{u}\not=0.
\end{equation}

From \eqref{eq34}, \eqref{eq31}, \eqref{eq33} and Proposition \ref{prop7} we infer that
\begin{eqnarray*}
  && \hat{u}=\overline{u} \in {\rm int}\,C_+, \\
  &\Rightarrow& \overline{u}\leq u \mbox{ for all }u\in S_v^+ \mbox{ (see \eqref{eq34}). }
\end{eqnarray*}
The proof is now complete.
\end{proof}

Using this lower bound, we can show that $S_v^+$ has a smallest element (minimal positive solution). So, we have a canonical way to choose an element from the solution set $S_v^+$ as $v$ varies (a selection of the solution multifunction $v\mapsto S_v^+$).

\begin{prop}\label{prop9}
  If hypotheses $H_0$, $H_1$ hold, then there exists $\tilde{u}_v\in S_v^+$ such that $\tilde{u}_v\leq u$ for all $u\in S_v^+$.
\end{prop}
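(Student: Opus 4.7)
The plan is to prove Proposition 9 by first establishing that $S_v^+$ is downward directed, then extracting a decreasing sequence from $S_v^+$ converging to the infimum, and finally showing this infimum is itself an element of $S_v^+$.

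First I would show that $S_v^+$ is downward directed. Given $u_1, u_2 \in S_v^+$, set $m = \min\{u_1, u_2\} \in W^{1,p(z)}_0(\Omega)$, and introduce the truncation
$$
\ell_v(z,x) = \begin{cases} g_v(z,x^+) & \text{if } x \leq m(z), \\ g_v(z,m(z)) & \text{if } x > m(z), \end{cases}
$$
together with its primitive $L_v(z,x) = \int_0^x \ell_v(z,s)\,ds$ and the truncated energy
$$
\hat{\Psi}_v(u) = \int_\Omega \frac{1}{p(z)} |Du|^{p(z)} dz + \int_\Omega \frac{1}{q(z)} |Du|^{q(z)} dz - \int_\Omega L_v(z,u)\,dz.
$$
Since $\ell_v(z,\cdot)$ is bounded above by $g_v(z,m(z)) \in L^\infty(\Omega)$, the functional $\hat{\Psi}_v$ is coercive and sequentially weakly lower semicontinuous, so it attains its infimum at some $u_0 \in W^{1,p(z)}_0(\Omega)$. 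Using $H_1(iv)$ as in the proof of Proposition \ref{prop6} and the fact that $\overline{u} \leq m$ (so $\overline{u}$ fits under the truncation and its small scaling $t\overline{u}$ gives $\hat{\Psi}_v(t\overline{u}) < 0$ for small $t \in (0,1)$), one obtains $u_0 \neq 0$. Testing $\hat{\Psi}_v'(u_0) = 0$ with $-u_0^-$ yields $u_0 \geq 0$, and testing with $(u_0 - m)^+$ (combined with $u_1, u_2 \in S_v^+$ and the definition of $\ell_v$) yields $u_0 \leq m$. Hence $\ell_v(z, u_0) = g_v(z, u_0)$ and $u_0 \in S_v^+$, establishing downward directedness.

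Next, invoking a standard result for downward directed sets in ordered Sobolev spaces (see, e.g., Hu--Papageorgiou), there exists a decreasing sequence $\{u_n\}_{n \in \NN} \subseteq S_v^+$ such that $u_n \downarrow \inf S_v^+$ pointwise a.e. Proposition \ref{prop8} yields $\overline{u} \leq u_n \leq u_1$ for all $n$, so $\{u_n\}$ is uniformly bounded in $L^\infty(\Omega)$. Testing the equation satisfied by $u_n$ with $h = u_n$ and using hypothesis $H_1(i)$ applied with $\rho = \|u_1\|_\infty$, together with the fact that the frozen convection term $\hat{r}(z)|Dv|^{\tau(z)-1}$ belongs to $L^\infty(\Omega)$, yields a uniform bound on $\rho_p(Du_n) + \rho_q(Du_n)$, hence on $\|u_n\|$ by Proposition \ref{prop1}.

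Then, passing to a subsequence, I may assume $u_n \overset{w}{\to} \tilde{u}_v$ in $W^{1,p(z)}_0(\Omega)$ and $u_n \to \tilde{u}_v$ in $L^{p(z)}(\Omega)$. Testing the equation for $u_n$ with $h = u_n - \tilde{u}_v$, the convection and reaction terms pass to zero by dominated convergence (using the $L^\infty$ bound on $\{u_n\}$ and $H_1(i)$), giving
$$
\limsup_{n \to \infty} \bigl[ \langle A_{p(z)}(u_n), u_n - \tilde{u}_v \rangle + \langle A_{q(z)}(u_n), u_n - \tilde{u}_v \rangle \bigr] \leq 0.
$$
Since $A_{q(z)}$ is monotone, the $(S)_+$ property of $A_{p(z)}$ from Proposition \ref{prop2} upgrades weak to strong convergence: $u_n \to \tilde{u}_v$ in $W^{1,p(z)}_0(\Omega)$. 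Passing to the limit in the weak formulation yields $\tilde{u}_v \in S_v^+$ (with $\tilde{u}_v \geq \overline{u} > 0$ by Proposition \ref{prop8}), and by construction $\tilde{u}_v \leq u$ for every $u \in S_v^+$.

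The main obstacle is the downward directedness step: verifying $u_0 \leq m$ for the truncated minimizer requires careful control of the test function $(u_0 - m)^+$ on the set $\{u_1 \leq u_2\} \cup \{u_2 < u_1\}$, using that $u_1$ and $u_2$ both solve the frozen equation with the same right-hand side $g_v(z, \cdot)$ so that on each subset the comparison with $\ell_v(z, u_0) = g_v(z, m)$ reduces to the monotonicity inequality for $A_{p(z)} + A_{q(z)}$.
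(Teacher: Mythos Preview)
Your proposal is correct and follows essentially the same route as the paper: downward directedness of $S_v^+$, extraction of a decreasing sequence via the Hu--Papageorgiou lemma, uniform bounds from Proposition~\ref{prop8} and $H_1(i)$, and the $(S)_+$ argument to pass to the limit. The only difference is that the paper simply cites a reference for downward directedness, whereas you sketch the truncation/minimization argument explicitly; your outline of that step (including the splitting of $\{u_0>m\}$ according to which $u_i$ realizes the minimum) is the standard one and is fine.
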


\begin{proof}
  From Papageorgiou, R\u adulescu and Repov\v s \cite{20Pap-Rad-Rep} (see the proof of Proposition \ref{prop7}), we know that $S_v^+$ is downward directed. So using Lemma 3.10 of Hu and Papageorgiou \cite[p. 178]{17Hu-Pap}, we can find a decreasing sequence $\{u_n\}_{n\in \NN}\subseteq S_v^+$ such that
$$
\inf_{n\in\NN} u_n=\inf S_v^+.
$$

We have
\begin{equation}\label{eq35}
  \langle A_{p(z)}(u_n),h\rangle+\langle A_{q(z)}(u_n),h\rangle=\int_\Omega g_v(z,u_n)hdz \mbox{ for all }h\in W^{1,p(z)}_0(\Omega), \mbox{ all }n\in \NN,
\end{equation}
\begin{equation}\label{eq36}
  \overline{u}\leq u_n\leq u_1 \mbox{ for all } n\in \NN \mbox{ (see Proposition \ref{prop8}). }
\end{equation}

In \eqref{eq35} we choose $h=u_n \in W^{1,p(z)}_0(\Omega)$. Using \eqref{eq36} and hypothesis $H_1(i)$ we infer that
$$
\{u_n\}_{n\in \NN}\subseteq W^{1,p(z)}_0(\Omega) \mbox{ is bounded. }
$$

So, by passing to a suitable subsequence if necessary, we may assume that
\begin{equation}\label{eq37}
  u_n\overset{w}{\to}\tilde{u}_v \mbox{ in } W^{1,p(z)}_0(\Omega) \mbox{ and } u_n\to \tilde{u}_v \mbox{ in } L^{p(z)}(\Omega).
\end{equation}

In \eqref{eq35} we choose the test function $h=u_n-\tilde{u}_n\in W^{1,p(z)}_0(\Omega)$, pass to the limit as $n\to\infty$ and use \eqref{eq37}. Then
\begin{eqnarray}\nonumber
   && \lim_{n\to\infty}\left[\langle A_{p(z)}(u_n),u_n-\tilde{u}_v\rangle+\langle A_{q(z)}(u_n), u_n-\tilde{u}_v\rangle\right]=0, \\ \nonumber
  &\Rightarrow& \limsup_{n\to\infty}\left[\langle A_{p(z)}(u_n), u_n-\tilde{u}_v\rangle+\langle A_{q(z)}(u), u_n-\tilde{u}_v\right]\leq 0 \\ \nonumber
  && \mbox{ (since $A_{q(z)}(\cdot)$ is monotone), } \\ \nonumber
  &\Rightarrow& \limsup_{n\to\infty}\langle A_{p(z)}(u_n),u_n-\tilde{u}_v\rangle\leq 0 \mbox{ (see \eqref{eq37}), } \\
  &\Rightarrow& u_n\to \tilde{u}_v \mbox{ in } W^{1,p(z)}_0(\Omega) \mbox{ (see Proposition \ref{prop2}). } \label{eq38}
\end{eqnarray}

In \eqref{eq35} we pass to the limit as $n\to\infty$ and use \eqref{eq38}. Then
\begin{eqnarray*}
  && \langle A_{p(z)}(\tilde{u}_v),h\rangle+\langle A_{q(z)}(\tilde{u}_v),h\rangle= \int_\Omega g_v(z,\tilde{u}_v)hdz \mbox{ for all }h\in   W^{1,p(z)}_0(\Omega), \\
  && \overline{u}\leq \tilde{u}_v.
\end{eqnarray*}

In follows that $\tilde{u}_v\in S_v^+$ and $\tilde{u}_v=\inf S_v^+$.
\end{proof}

So, we define the minimal solution map $\beta:C_0^1(\overline{\Omega})\to C_0^1(\overline{\Omega})$ by
$$
\beta(v)=\tilde{u}_v\in{\rm int}\,C_+ \mbox{ for all }v\in C_0^1(\overline{\Omega}).
$$

Clearly, a fixed point of this map will be the positive solution of problem \eqref{eq1}. To produce a fixed point of $\beta(\cdot)$, we  use the Leray-Schauder Alternative Principle (see Theorem \ref{th3}). This theorem requires that the minimal solution map $\beta(\cdot)$ is compact. We prove this property in the next section.

\section{The minimal solution map}

In this section we show that the minimal solution map $\beta:C_0^1(\overline{\Omega})\to C_0^1(\overline{\Omega})$ is compact. To this end the following proposition is helpful.

\begin{prop}\label{prop10}
  If hypotheses $H_0$, $H_1$ hold, $v_n\to v$ in $C_0^1(\overline{\Omega})$ and $u\in S_v^+$, then we can find $u_n\in S_{v_n}^+$ $n\in \NN$ such that $u_n\to u$ in $C_0^1(\overline{\Omega})$.
\end{prop}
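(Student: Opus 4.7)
The plan is to construct $u_n \in S_{v_n}^+$ by trapping it between a subsolution $\underline w_n$ and a supersolution $\bar w_n$ of the $v_n$-frozen problem, both tending to $u$ in $C_0^1(\overline\Omega)$, and then to extract a $C^{1,\alpha}$-convergent subsequence via the anisotropic regularity theory already used in Proposition \ref{prop6}.

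Concretely, I would take $\underline w_n, \bar w_n$ to be the unique weak solutions of
$$-\Delta_{p(z)} w - \Delta_{q(z)} w = g_{v_n}(z, u(z)) \pm \varepsilon_n \text{ in } \Omega, \quad w|_{\partial\Omega} = 0,$$
with $\varepsilon_n \downarrow 0$ dominating the uniform error $\|\hat r(|Dv_n|^{\tau(\cdot)-1} - |Dv|^{\tau(\cdot)-1})\|_\infty$; weak comparison for $A_{p(z)}+A_{q(z)}$ then gives $\underline w_n \le u \le \bar w_n$, and the Tan-Fang anisotropic regularity upgrades the natural $W^{1,p(z)}_0$-convergence to $\underline w_n, \bar w_n \to u$ in $C_0^1(\overline\Omega)$. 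The sub- and supersolution properties with respect to the nonlinear reaction $g_{v_n}(z,\cdot)$ are verified using hypothesis $H_1(v)$, which yields the bound $f(z, \bar w_n) \le (\bar w_n/u)^{p(z)-1} f(z, u)$ (and the reverse inequality for $\underline w_n$); Hopf's lemma applied to $u \in \text{int}\, C_+$ makes the ratios $\bar w_n/u, \underline w_n/u$ tend to $1$ uniformly on $\overline\Omega$, so $\varepsilon_n$ absorbs the residual nonlinear error.

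Next, using the barriers, define the truncated Carath\'eodory reaction $\hat g_n(z, x) = g_{v_n}(z, \text{med}(\underline w_n(z), x, \bar w_n(z)))$ and minimize the coercive, sequentially weakly lower semicontinuous energy $\hat\Psi_n$ on $W^{1,p(z)}_0(\Omega)$ (exactly as in the proof of Proposition \ref{prop5}). Testing the resulting Euler-Lagrange equation with $(\underline w_n - u_n)^+$ and $(u_n - \bar w_n)^+$, the strict monotonicity of $A_{p(z)} + A_{q(z)}$ (Proposition \ref{prop2}) yields $\underline w_n \le u_n \le \bar w_n$, so on this order interval the median equals $u_n$ and $u_n \in S_{v_n}^+$. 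The uniform $L^\infty$-bound coming from $\bar w_n \to u$ combined with the anisotropic regularity theorems invoked in Proposition \ref{prop6} gives a uniform $C^{1,\alpha}$-bound on $\{u_n\}$, and Arzel\`a-Ascoli extracts a $C_0^1$-convergent subsequence whose limit must equal $u$ by the sandwich $\underline w_n \le u_n \le \bar w_n \to u$; a standard Urysohn-type argument then upgrades subsequential to full-sequence convergence.

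The main obstacle is the construction of the barriers: because $-\Delta_{p(z)} - \Delta_{q(z)}$ is neither linear nor homogeneous, neither an additive nor a multiplicative correction to $u$ automatically yields sub/supersolutions, and the calibration of $\varepsilon_n$ against $\|\bar w_n - u\|_\infty \lesssim \varepsilon_n^{1/(p_+ - 1)}$ (from elliptic regularity) is delicate and relies crucially on the concavity-type estimate supplied by hypothesis $H_1(v)$.
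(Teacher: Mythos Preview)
Your barrier approach is genuinely different from the paper's, which uses a Picard iteration $V(w_n^k)=N_{g_{v_n}}(w_n^{k-1})$ starting at the solution of $V(y_n)=g_{v_n}(\cdot,u)$, proves $\{w_n^k\}_{k\in\NN}$ is bounded via the nonresonance condition $H_1(ii)$ and Lemma~\ref{lem4}, and then invokes a double-limit argument. No sub/supersolutions appear.

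Your scheme, however, has a genuine gap at exactly the point you flag as ``delicate''. For $\bar w_n$ to be a supersolution of the $v_n$-problem you need $\varepsilon_n\ge f(z,\bar w_n)-f(z,u)$. Hypothesis $H_1(v)$ only gives $f(z,\bar w_n)-f(z,u)\le\bigl[(\bar w_n/u)^{p(z)-1}-1\bigr]f(z,u)$, which is of order $\|\bar w_n-u\|_{C^1(\overline\Omega)}$ after using the Hopf estimate for $u\in{\rm int}\,C_+$. But the inverse of $V=A_{p(z)}+A_{q(z)}$ is not Lipschitz when $p_+>2$: from the monotonicity inequality $\langle V(u)-V(w),u-w\rangle\ge c\,\rho_p(D(u-w))$ one obtains at best $\|\bar w_n-u\|\lesssim\varepsilon_n^{1/(p_+-1)}$, and the $C^1$-modulus inherited from the Tan--Fang/Fukagai--Narukawa regularity is no better. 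Hence the nonlinear residual is of order $\varepsilon_n^{1/(p_+-1)}\gg\varepsilon_n$ as $\varepsilon_n\downarrow0$, and the inequality $\varepsilon_n\ge C\,\varepsilon_n^{1/(p_+-1)}$ fails for all small $\varepsilon_n$. The same obstruction arises for the subsolution $\underline w_n$. Since $H_0$ does not restrict $p_+\le2$, the construction breaks down in the generic degenerate regime; $H_1(v)$ gives no improvement because it controls the \emph{ratio} $f(z,\bar w_n)/f(z,u)$, not the difference at a rate better than linear in $\bar w_n-u$.

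The paper's iteration sidesteps this entirely: it never needs to compare $g_{v_n}(\cdot,w)$ at two different $w$'s with a quantitative rate, only to pass to weak limits and use $(S)_+$, with the nonresonance hypothesis $H_1(ii)$ supplying the a~priori bound. If you want to salvage a barrier argument, you would need either a Lipschitz estimate for $V^{-1}$ near $u$ (unavailable at interior critical points of $u$) or a fixed-point construction of $\bar w_n$ that already incorporates $f(z,\bar w_n)$ on the right-hand side --- at which point you are essentially rediscovering the paper's iteration.
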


\begin{proof}
First we consider the following anisotropic Dirichlet problem
\begin{equation}\label{eq39}
  -\Delta_{p(z)} y(z)-\Delta_{q(z)}y(z)=g_{v_n}(z,u(z)) \mbox{ in }\Omega,\ y|_{\partial\Omega}=0,\ y\geq0,\ n\in \NN.
\end{equation}

Hypotheses $H_0$, $H_1(i)$ imply that $g_{v_n}(\cdot,u(\cdot))\in L^\infty(\Omega)$. We consider the nonlinear operator $V: W^{1,p(z)}_0(\Omega)\to  W^{-1,p'(z)}(\Omega)= W^{1,p(z)}_0(\Omega)^*$ defined by
$$
V(u)=A_{p(z)}(u)+A_{q(z)}(u) \mbox{ for all }u\in  W^{1,p(z)}_0(\Omega).
$$

By Proposition \ref{prop2}, $V(\cdot)$ is continuous, strictly monotone (thus, maximal monotone too) and also we have
\begin{eqnarray*}
  && \langle V(u),u\rangle=\rho_p(Du)+\rho_q(Du) \mbox{ for all }u\in  W^{1,p(z)}_0(\Omega), \\
  &\Rightarrow& V(\cdot) \mbox{ is coercive (see Proposition \ref{prop1}).}
\end{eqnarray*}

Therefore $V(\cdot)$ is surjective (see Corollary 2.8.7 of Papageorgiou, R\u adulescu and Repov\v s \cite[p. 135]{21Pap-Rad-Rep}). So, we can find $y_n\in  W^{1,p(z)}_0(\Omega)$, $y_n\geq0$, $y_n\not=0$ such that
\begin{equation}\label{eq40}
  V(y_n)=N_{g_{v_n}}(u) \mbox{ for all } n\in \NN.
\end{equation}

The strict monotonicity of $V(\cdot)$ implies that this solution of problem \eqref{eq39} is unique. On \eqref{eq40} we act with $y_n\in  W^{1,p(z)}_0(\Omega)$ and obtain that $\{y_n\}_{n\in\NN}\subseteq  W^{1,p(z)}_0(\Omega)$ is bounded. Then the anisotropic regularity theory (see \cite{10Fan-Zha}, \cite{23Pap-Rad-Zha}) implies that
\begin{equation}\label{eq41}
  y_n\in L^\infty(\Omega),\; \|y_n\|_\infty\leq C_9 \mbox{ for some } C_9>0, \mbox{ all }n\in\NN.
\end{equation}

Then invoking Lemma 3.3 of Fukagai and Narukawa \cite{12Fuk-Nar}, we can find $\alpha\in(0,1)$ and $C_{10}>0$ such that
$$
y_n\in C_0^{1,\alpha}(\overline{\Omega}),\;\|y_n\|_{C_0^{1,\alpha}(\overline{\Omega})}
\leq C_{10} \mbox{ for all }n\in \NN.
$$

We know that $C_0^{1,\alpha}(\overline{\Omega})\hookrightarrow C_0^1(\overline{\Omega})$ compactly. Therefore by passing to a subsequence if necessary, we can have
\begin{equation}\label{eq42}
  y_n\to\tilde{u} \mbox{ in } C_0^1(\overline{\Omega}) \mbox{ as }n\to\infty.
\end{equation}

Passing to the limit as $n\to\infty$ in \eqref{eq40} and using \eqref{eq42}, we obtain
\begin{eqnarray*}
  && V(\tilde{u})=N_{g_v}(u). \\
  &\Rightarrow& \tilde{u}=u \mbox{ (from the uniqueness of the solution). }
\end{eqnarray*}

Therefore for the original sequence we have
$$
y_n\to u \mbox{ in } C_0^1(\overline{\Omega}).
$$

Next, we consider the following anisotropic Dirichlet problem
$$
-\Delta_{p(z)}w(z)-\Delta_{q(z)}w(z)=g_{v_n}(z,y_n(z)) \mbox{ in }\Omega,\; w|_{\partial\Omega}=0,\;w\geq0.
$$

Reasoning as above, we infer that this problem has a unique positive solution $w_n^1\in {\rm int}\,C_+$ and
$$
w_n^1\to u \mbox{ in } C_0^1(\overline{\Omega}) \mbox{ as } n\to\infty.
$$

Setting $w_n^0=y_n$ and continuing this way, we generate a sequence $\{w_n^k\}_{n\in \NN_0}\subseteq{\rm int}\,C_+$ such that
\begin{eqnarray}
  && V(w_n^k)=N_{g_{v_n}}(w_n^{k-1}) \mbox{ in }  W^{1,p(z)}_0(\Omega)^* \mbox{ for all }k,n\in \NN, \label{eq43} \\
  && w_n^k\to u \mbox{ in } C_0^1(\overline{\Omega}) \mbox{ as }n\to\infty \mbox{ for every }k\in \NN. \label{eq44}
\end{eqnarray}

\smallskip
{\it
Claim.} For every $n\in \NN$, the sequence $\{w_n^k\}_{k\in\NN}\subseteq  W^{1,p(z)}_0(\Omega)$ is bounded.

To prove the Claim, we argue by contradiction. So, suppose that at least for a subsequence, we have
$$
\|w_n^k\|\to\infty \mbox{ as }k\to+\infty.
$$

Then, we can say that
\begin{equation}\label{eq45}
  \rho_p(Dw_n^k)\to+\infty \mbox{ as }k\to+\infty,\: \left\{\rho_p(Dw_n^k)\right\}_{k\in \NN} \mbox{ is nondecreasing. }
\end{equation}

We set $\hat{x}_k=\frac{w_n^k}{\rho_p(Dw_n^k)^{1/p(z)}}\in  W^{1,p(z)}_0(\Omega)$, $k\in \NN_0$. On \eqref{eq43} we act with $w_n^k\in  W^{1,p(z)}_0(\Omega)$ and obtain

\begin{eqnarray}\nonumber
  && \rho_p(Dw_n^k)+\rho_q(Dw_n^k)=\int_\Omega g_{v_n}(z,w_n^{k-1}) w_n^k dz \\ \nonumber
  &\Rightarrow& \rho_p(D\hat{x}_k)-\hat{C}_k+\int_\Omega \frac{1}{\rho_p(Dw_n^k)^{\frac{p(z)-q(z)}{p(z)}}}\left[
\frac{|Dw_n^k|}{\rho_p(Dw_n^k)^{\frac{1}{p(z)}}}\right]^{q(z)}dz \\ \nonumber
  &=& \int_\Omega \frac{g_{v_n}(z,w_n^{k-1})}{\rho_p(Dw_n^k)^{1-\frac{1}{p(z)}}}\hat{x}_k dz \\ \nonumber
   &\leq& \int_\Omega \frac{g_{v_n}(z,w_n^{k-1})}{\rho_p(Dw_n^{k-1})^{1-\frac{1}{p(z)}}}\hat{x}_k dz \mbox{ (see \eqref{eq45}) } \\ \nonumber
   &=& \int_\Omega \frac{g_{v_n}(z,w_n^{k-1})}{(w_n^{k-1})^{p(z)-1}}\left[
\frac{w_n^{k-1}}{\rho_p(Dw_n^{k-1})^{\frac{1}{p(z)}}}\right]^{p(z)-1}
\hat{x}_kdz \\ \nonumber
  &=& \int_\Omega \frac{g_{v_n}(z,w_n^{k-1})}{(w_n^{k-1})^{p(z)-1}}\hat{x}_{k-1}^{p(z)-1} \hat{x}_k dz,\\
  \Rightarrow&& \rho(D\hat{x_k})\leq \int_\Omega \frac{g_{v_n}(z,w_n^{k-1})}{(w_n^{k-1})^{p(z)-1}}\hat{x}_{k-1}^{p(z)-1}
\hat{x}_k dz +\hat{C}_k \mbox{ for all }k\in\NN. \label{eq46}
\end{eqnarray}

From the proof of Lemma \ref{lem4}, we know that
\begin{equation}\label{eq47}
  0<C_{11}\leq \|\hat{x}_k\|\leq C_{12} \mbox{ for all }k\geq k_0,\mbox{ some } 0<C_{11}\leq C_{12}.
\end{equation}

So we may assume that
\begin{equation}\label{eq48}
  \hat{x}_k\overset{w}{\to}\hat{x} \mbox{ in }  W^{1,p(z)}_0(\Omega) \mbox{ and } \hat{x}_k\to \hat{x} \mbox{ in }L^{p(z)}(\Omega) \mbox{ as }k\to\infty.
\end{equation}

We return to \eqref{eq46}, pass to the limit as $k\to\infty$ and use \eqref{eq48}, the fact that $\rho_p(\cdot)$ is sequentially weakly lower semicontinuous (being continuous convex) and hypothesis $H_1(ii)$. We obtain
\begin{eqnarray*}
  && \rho_p(D\hat{x})\leq \int_\Omega \vartheta(z)\hat{x}^{p(z)}dz, \\
  &\Rightarrow& C_1\rho_p(D\hat{x})\leq 0 \mbox{ (see Lemma \ref{lem4}), } \\
  &\Rightarrow& \hat{x}=0.
\end{eqnarray*}

So, from \eqref{eq46} we have that
\begin{eqnarray*}
  && \rho_p(D\hat{x}_k)\to 0 \mbox{ as }k\to\infty, \\
  &\Rightarrow& \hat{x}_k \to0 \mbox{ in }  W^{1,p(z)}_0(\Omega) \mbox{ (see Proposition \ref{prop1}), }
\end{eqnarray*}
which contradicts \eqref{eq47}.

This proves the Claim.

As before, using the Claim and the anisotropic regularity theory, we can find $\alpha\in(0,1)$ and $C_{13}>0$ such that
$$
w_n^k\in C_0^{1,\alpha}(\overline{\Omega}),\;\|w_n^k\|_{ C_0^{1,\alpha}(\overline{\Omega})}\leq C_{13} \mbox{ for all }k\in \NN.
$$

Since $ C_0^{1,\alpha}(\overline{\Omega})\hookrightarrow C_0^1(\overline{\Omega})$ compactly, at least for a subsequence we have
$$
w_n^k\to u_n \mbox{ in } C_0^1(\overline{\Omega}) \mbox{ as } k\to\infty,\;u_n\not=0 \mbox{ for all }n\in\NN.
$$

From \eqref{eq43} in the limit as $k\to \infty$, we obtain
\begin{eqnarray}\nonumber
  &&  V(u_n)=N_{g_{v_n}}(u_n) \mbox{ in } W^{1,p(z)}_0(\Omega)^* \mbox{ for all }n\in\NN,\\
  &\Rightarrow& u_n\in S_{v_n}^+ \mbox{ for all }n\in\NN. \label{eq49}
\end{eqnarray}

As we did in the proof of the Claim, via a contradiction argument, we also show that $\{u_n\}_{n\in\NN}\subseteq W^{1,p(z)}_0(\Omega)$ is bounded and from this we infer that $\{u_n\}_{n\in\NN}\subseteq C_0^1(\overline{\Omega})$ is relatively compact. So, we can say that
$$
u_n\to \tilde{u} \mbox{ in } C_0^1(\overline{\Omega}) \mbox{ as }n\to\infty.
$$

The double limit lemma (see Proposition A.2.35 of Gasi\'nski and Papageorgiou \cite[p. 906]{14Gas-Pap}), implies that $\tilde{u}=u$ and so finally we have
$$
u_n\to u \mbox{ in } C_0^1(\overline{\Omega}) \mbox{ and }u_n\in S_v^+ \mbox{ for all }n\in \NN \mbox{ (see \eqref{eq49}) }.
$$
The proof is now complete.
\end{proof}

Using this proposition, we can show the compactness of the  minimal solution map $\beta(\cdot)$.

\begin{prop}\label{prop11}
  If hypotheses $H_0$, $H_1$ hold, then the minimal map $\beta(\cdot): C_0^1(\overline{\Omega})\to C_0^1(\overline{\Omega})$ is compact.
\end{prop}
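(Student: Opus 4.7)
The plan is to verify the two defining conditions of compactness for $\beta$: (a) bounded subsets of $C_0^1(\overline{\Omega})$ are mapped to relatively compact subsets of $C_0^1(\overline{\Omega})$, and (b) $\beta$ is continuous.

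For (a), let $B \subseteq C_0^1(\overline{\Omega})$ be bounded and write $u_v := \beta(v) \in S_v^+$ for $v \in B$. The central step is the \emph{a priori} estimate
$$\sup_{v \in B}\|u_v\|_{W_0^{1,p(z)}(\Omega)} < \infty.$$
I would argue by contradiction along a sequence $v_n \in B$ with $\|u_{v_n}\| \to \infty$. Since $B$ is $C^1$-bounded, the convection term $\hat r(z)|Dv_n|^{\tau(z)-1}$ stays uniformly bounded in $L^\infty(\Omega)$, so testing the weak equation for $u_{v_n}$ against $u_{v_n}$ itself and normalizing $y_n := u_{v_n}/\rho_p(Du_{v_n})^{1/p(z)}$ reproduces the inequality in the Claim inside Proposition \ref{prop10}. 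The $q(z)$-contribution and the convection contribution are both $o(\rho_p(Du_{v_n}))$ (the latter because $\tau_+ < p_-$ and $\hat r|Dv_n|^{\tau-1}$ is $L^\infty$-bounded), while hypothesis $H_1(ii)$ together with Fatou's lemma forces any weak limit $y \ge 0$ to satisfy $\rho_p(Dy) \le \int_\Omega \vartheta(z) y^{p(z)}dz$. Lemma \ref{lem4} then gives $y = 0$, hence $\rho_p(Dy_n) \to 0$, contradicting the lower bound $\|y_n\| \ge C > 0$ obtained exactly as in \eqref{eq47}. Once the $W_0^{1,p(z)}$-bound is secured, the anisotropic $L^\infty$-bound of Fan--Zhao \cite{10Fan-Zha} and the $C^{1,\alpha}$-regularity of Fukagai--Narukawa \cite{12Fuk-Nar} produce $\alpha \in (0,1)$ and $C > 0$ with $\|u_v\|_{C_0^{1,\alpha}(\overline{\Omega})} \le C$ for all $v \in B$; the compact embedding $C_0^{1,\alpha}(\overline{\Omega}) \hookrightarrow C_0^1(\overline{\Omega})$ gives relative compactness of $\beta(B)$.

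For (b), let $v_n \to v$ in $C_0^1(\overline{\Omega})$ and $u_n := \beta(v_n)$. By (a), along a subsequence $u_n \to \tilde u$ in $C_0^1(\overline{\Omega})$. Passing to the limit in the weak formulation $V(u_n) = N_{g_{v_n}}(u_n)$, using the uniform convergence $Dv_n \to Dv$ and the Carath\'eodory property of $f$, yields $V(\tilde u) = N_{g_v}(\tilde u)$, and nontriviality follows from $u_n \ge \overline u$ via Proposition \ref{prop8}; thus $\tilde u \in S_v^+$, so minimality of $\tilde u_v = \beta(v)$ gives $\tilde u_v \le \tilde u$. For the reverse inequality I would invoke Proposition \ref{prop10} applied to $\tilde u_v \in S_v^+$, producing $\hat w_n \in S_{v_n}^+$ with $\hat w_n \to \tilde u_v$ in $C_0^1(\overline{\Omega})$. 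Minimality of $u_n = \beta(v_n)$ in $S_{v_n}^+$ yields $u_n \le \hat w_n$, and passing to the limit along the convergent subsequence gives $\tilde u \le \tilde u_v$. Hence $\tilde u = \tilde u_v = \beta(v)$. Since every subsequence of $\{\beta(v_n)\}$ admits a further subsequence converging to the same limit $\beta(v)$, Urysohn's subsequence principle delivers $\beta(v_n) \to \beta(v)$ in $C_0^1(\overline{\Omega})$, completing continuity.

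The main obstacle is the uniform $W_0^{1,p(z)}$-bound in step (a). The nonuniform nonresonance condition $H_1(ii)$ is quantitative only in the limit $x \to +\infty$, so one must execute the blow-up/normalization argument of Proposition \ref{prop5} with uniform constants as $v$ varies over $B$, keeping careful track of both the subcritical $q(z)$-term and the convection forcing. The hypothesis $\tau_+ < p_-$ in $H_0$ is precisely what makes the convection term a genuine lower-order perturbation in this normalization, so it absorbs into the constant and does not interfere with the Lemma \ref{lem4} conclusion; everything downstream is then a standard compactness-plus-Urysohn routine.
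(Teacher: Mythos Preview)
Your proof is correct. Part (b) is essentially identical to the paper's argument: the paper's $u_n$ from Proposition~\ref{prop10} and its subsequential limit $\tilde u_*$ of $\{\beta(v_n)\}$ play exactly the roles of your $\hat w_n$ and $\tilde u$, and the sandwiching $\beta(v) \le \tilde u_* \le \beta(v)$ proceeds the same way.

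For part (a), however, you and the paper take genuinely different routes to the uniform $W_0^{1,p(z)}$-bound. You argue by contradiction via the blow-up normalization $y_n = u_{v_n}/\rho_p(Du_{v_n})^{1/p(z)}$, recycling the machinery of Proposition~\ref{prop5} and the Claim in Proposition~\ref{prop10}: the asymptotic condition in $H_1(ii)$ and Lemma~\ref{lem4} force the weak limit $y=0$, contradicting the lower bound~\eqref{eq47}. The paper instead proceeds \emph{directly}: from $H_1(i),(ii)$ it extracts the global pointwise estimate~\eqref{eq50}, namely $0 \le f(z,x) \le [\vartheta(z)+\varepsilon]x^{p(z)-1} + C_\varepsilon$, and testing the equation for $\tilde u_v$ against itself immediately yields, via Lemma~\ref{lem4} and~\eqref{eq3}, an inequality of the form $\bigl[C_1 - \varepsilon/\hat\lambda_1\bigr]\rho_p(D\tilde u_v) \le C\bigl[1 + \rho_p(D\tilde u_v)^{1/p_-}\bigr]$, which is algebraically bounded since $p_->1$. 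The paper's direct route is shorter and avoids the anisotropic normalization subtleties (the correction terms $\hat C_n$ that arise because $\rho_p(Dy_n)\neq 1$ when $p(\cdot)$ is nonconstant); your route has the compensating advantage that it reuses verbatim an argument already carried out twice in the paper, so nothing new needs to be checked.
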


\begin{proof}
First we show that $\beta(\cdot)$ maps bounded sets in $C_0^1(\overline{\Omega})$ onto relatively compact subsets of $C_0^1(\overline{\Omega})$.

So, let $B\in C_0^1(\overline{\Omega})$ be bounded. Hypotheses $H_1(i),(ii)$ imply that given $\varepsilon>0$, we can find $C_\varepsilon>0$ such that
\begin{equation}\label{eq50}
  0\leq f(z,x)\leq [\vartheta(z)+\varepsilon]x^{p(z)-1}+C_\varepsilon \mbox{ for a.a. }z\in\Omega, \mbox{ all }x\geq0.
\end{equation}

For $v\in B$, we write $\beta(v)=\tilde{u}_v\in {\rm int}\, C_+$. We have
\begin{eqnarray}\nonumber
  &&\langle A_{p(z)}(\tilde{u}_v),h\rangle+\langle A_{q(z)}(\tilde{u}_v),h\rangle  \\ \nonumber
  &=& \int_\Omega \left[\hat{r}(z)|Dv|^{\tau(z)-1}+f(z,\tilde{u}_v)\right]hdz \\
  &\leq& \int_\Omega\left[\hat{r}(z)|Dv|^{\tau(z)-1}+(\vartheta(z)+\varepsilon)
\tilde{u}_v^{\tau(z)-1}+C_\varepsilon\right]hdz \label{eq51}\\ \nonumber
  && \mbox{ for all }h\in W^{1,p(z)}_0(\Omega),\ h\geq0 \mbox{ see \eqref{eq50}. }
\end{eqnarray}

In \eqref{eq51} we choose $h=\tilde{u}_v\in W^{1,p(z)}_0(\Omega)$, $h\geq0$. We obtain
\begin{eqnarray} \nonumber
  && \rho_p(D\tilde{u}_v)-\int_\Omega \vartheta(z)\tilde{u}_v^{p(z)}dz-\varepsilon\rho_p(\tilde{u}_v)\leq C_{14}\left[1+\rho_p(D\tilde{u}_v)^{1/p_-}\right] \\ \nonumber
  && \mbox{ for some } C_{14}>0, \\
  &\Rightarrow& \left[C_1-\frac{\varepsilon}{\hat{\lambda}_1}\right]\rho_p(D\tilde{u}_v)\leq C_{14}\left[1+\rho_p(D\tilde{u}_v)^{1/p_-}\right] \label{eq52}\\ \nonumber
  && \mbox{ (see \eqref{eq3} and Lemma \ref{lem4}). }
\end{eqnarray}

Choosing $\varepsilon\in \left(0,\hat{\lambda}_1 C_1\right)$, since $p_->1$, from \eqref{eq52} we infer that
\begin{eqnarray*}
  && \left\{\rho_p(D\tilde{u}_v)\right\}_{v\in B}\subseteq \RR_+ \mbox{ is bounded, } \\
  &\Rightarrow& \{\tilde{u}_v\}_{v\in B} \subseteq W^{1,p(z)}_0(\Omega) \mbox{ is bounded (see Proposition \ref{prop1}).}
\end{eqnarray*}

From this as before we obtain that
$$
\left\{\tilde{u}_v=\beta(v)\right\}_{v\in B} \subseteq C_0^1(\overline{\Omega}) \mbox{ is relatively compact.}
$$

Next, we show that $\beta(\cdot)$ is continuous. Suppose that $v_n\to v$ in $C_0^1(\overline{\Omega})$. According to Proposition \ref{prop10}, we can find $u_n\in S_{v_n}^+\subseteq{\rm int}\,C_+$ $n\in \NN$ such that
\begin{equation}\label{eq53}
  u_n\to \beta(v)=\tilde{u}_v \mbox{ in } C_0^1(\overline{\Omega}) \mbox{ as }n\to \infty.
\end{equation}

From the first part of the proof, we have that
$$
\left\{\beta(v_n)\right\}_{n\in\NN} \subseteq C_0^1(\overline{\Omega}) \mbox{ is relatively compact. }
$$

So, for at least a subsequence we have
\begin{equation}\label{eq54}
  \beta(v_n)\to \tilde{u}_* \mbox{ in } C_0^1(\overline{\Omega}) \mbox{ as } n\to\infty.
\end{equation}

Recall that $\overline{u}\leq \beta(v_n)$ for all $n\in\NN$ (see Proposition \ref{prop8}). Hence $\overline{u}\leq \tilde{u}_*$ and so using \eqref{eq54} we conclude that $\tilde{u}_*\in S_v^+\subseteq{\rm int}\,C_+$. Then
\begin{equation}\label{eq55}
  \beta(v)\leq \tilde{u}_* .
\end{equation}

On the other hand, we have
\begin{eqnarray*}
  && \beta(v_n)\leq u_n \mbox{ for all }n\in\NN, \\
  &\Rightarrow& \tilde{u}_*\leq \beta(v)=\tilde{u}_v \mbox{ (see \eqref{eq54} and \eqref{eq53}) } \\
  &\Rightarrow& \tilde{u}_*=\beta(v) \mbox{ (see \eqref{eq55}) } \\
  &\Rightarrow& \beta(v_n)\to \beta(v) \mbox{ in } C_0^1(\overline{\Omega}) \mbox{ (see \eqref{eq54}), }  \\
  &\Rightarrow& \beta(\cdot) \mbox{ is compact. }
\end{eqnarray*}
The proof is now complete.
\end{proof}

\section{Positive solution}

In this section using the Leray-Schauder Alternative Principle (see Theorem \ref{th3}) on the minimal solution map $\beta(\cdot)$, we produce a fixed point which is a positive solution of problem \eqref{eq1}.

We introduce the set
$$
D=\{u\in C_0^1(\overline{\Omega}):\:u=t\beta(u),\ 0<t<1\}.
$$

\begin{prop}\label{prop12}
  If hypotheses $H_0$, $H_1$ hold, then $D\subseteq C_0^1(\overline{\Omega})$ is bounded.
\end{prop}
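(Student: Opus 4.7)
Fix any $u\in D$, so that $u=t\beta(u)$ for some $t\in (0,1)$, and set $y:=\beta(u)=u/t\in S_u^+\subseteq {\rm int}\,C_+$. Then $y$ satisfies the weak frozen equation
$$\langle A_{p(z)}(y),h\rangle+\langle A_{q(z)}(y),h\rangle=\int_\Omega \hat r(z)|Du|^{\tau(z)-1}h\,dz+\int_\Omega f(z,y)h\,dz$$
for every $h\in W^{1,p(z)}_0(\Omega)$, and the decisive observation is that $|Du|^{\tau(z)-1}=t^{\tau(z)-1}|Dy|^{\tau(z)-1}\le |Dy|^{\tau(z)-1}$, since $t\in(0,1)$ and $\tau(z)>1$. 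Thus the convection source can be bounded purely in terms of $|Dy|$, and my strategy is to first extract a uniform $W^{1,p(z)}_0$-bound on $y$ and then lift it to $C_0^1(\overline\Omega)$ via the anisotropic regularity theory; since $\|u\|_{C_0^1(\overline\Omega)}=t\|y\|_{C_0^1(\overline\Omega)}\le \|y\|_{C_0^1(\overline\Omega)}$, this will bound $D$.

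Testing with $h=y$ and invoking the nonuniform nonresonance bound $f(z,s)\le (\vartheta(z)+\varepsilon)s^{p(z)-1}+C_\varepsilon$ implied by $H_1(i),(ii)$, together with Lemma \ref{lem4}, yields
$$C_1\rho_p(Dy)+\rho_q(Dy)\le \int_\Omega \hat r(z)|Dy|^{\tau(z)-1}y\,dz +\varepsilon\rho_p(y)+C_\varepsilon\|y\|_1.$$
I then apply Young's inequality with conjugate exponents $p(z)/(\tau(z)-1)$ and $\sigma(z):=p(z)/(p(z)-\tau(z)+1)$ to the convection integral, which for arbitrary $\delta>0$ gives
$$\int_\Omega \hat r(z)|Dy|^{\tau(z)-1}y\,dz\le \delta\|\hat r\|_\infty\rho_p(Dy)+C(\delta)\int_\Omega y^{\sigma(z)}\,dz.$$
The condition $\tau_+<p_-$ from $H_0$ forces $\sigma_+\le p_-/(p_-+1-\tau_+)<p_-$, so by Proposition \ref{prop1} and the embedding $W^{1,p(z)}_0(\Omega)\hookrightarrow L^{\sigma_+}(\Omega)$ the integral $\int_\Omega y^{\sigma(z)}\,dz$ is bounded by $c(1+\rho_p(Dy)^{\sigma_+/p_-})$ with $\sigma_+/p_-<1$; a similar sub-linear bound applies to $\|y\|_1$.

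Choosing $\delta$ and $\varepsilon$ small enough (and using $\rho_p(y)\le \rho_p(Dy)/\hat\lambda_1$ from \eqref{eq3}) to absorb the $\rho_p(Dy)$ and $\rho_p(y)$ contributions into $C_1\rho_p(Dy)$ on the left-hand side produces an inequality of the form
$$\tfrac{C_1}{2}\rho_p(Dy)\le c_1+c_2\rho_p(Dy)^{\alpha}$$
with $\alpha<1$ and constants independent of $u\in D$, whence $\rho_p(Dy)$ and therefore $\|y\|$ are uniformly bounded. Because $\tau_+<p_-$ renders the gradient source strictly sub-natural, the anisotropic $L^\infty$-estimate of Fan--Zhao followed by the $C^{1,\alpha}$-estimate of Fukagai--Narukawa---the regularity chain already used in Proposition \ref{prop10}---upgrades this to $\|y\|_{C^{1,\alpha}_0(\overline\Omega)}\le M$ uniformly in $u\in D$, and consequently $\|u\|_{C_0^1(\overline\Omega)}\le M$.

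The main obstacle is the absorption step: the Young coefficient $\delta\|\hat r\|_\infty$ of $\rho_p(Dy)$ must be made strictly smaller than the nonresonance margin $C_1$ produced by Lemma \ref{lem4}, while the remainder $C(\delta)\int_\Omega y^{\sigma(z)}\,dz$ must simultaneously be of strictly sub-linear order in $\rho_p(Dy)$. Both demands rely on the strict inequalities $\tau_+<p_-$ in $H_0$ (giving $\sigma_+<p_-$) and $\vartheta\not\equiv\hat\lambda_1$ in $H_1(ii)$ (giving $C_1>0$); without them the closing inequality degenerates and the Leray--Schauder principle cannot be applied.
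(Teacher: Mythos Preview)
Your argument is correct and actually takes a cleaner route than the paper. The paper tests the weak equation for $y=\beta(u)=\tfrac{1}{t}u$ with $h=u$, which forces it to compare $f(z,\tfrac{1}{t}u)$ with $f(z,u)$ via the quasi-homogeneity hypothesis $H_1(v)$ and then to reconcile the mismatched powers $t^{p_--1}$ and $t^{p_+-1}$ using the structural restriction $0\le p_+-p_-\le 1$ from $H_0$; only after that does it estimate the convection term by H\"older's inequality. By testing instead with $h=y$ and observing that $|Du|^{\tau(z)-1}=t^{\tau(z)-1}|Dy|^{\tau(z)-1}\le |Dy|^{\tau(z)-1}$, you bypass both of those hypotheses entirely: the growth bound \eqref{eq50} on $f$ together with Lemma~\ref{lem4} already controls $\int_\Omega f(z,y)y\,dz$, and Young's inequality with the pointwise conjugate pair $p(z)/(\tau(z)-1),\ \sigma(z)$ absorbs the convection term because $\tau_+<p_-$ makes $\sigma_+<p_-$. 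Your approach therefore shows that $H_1(v)$ and the constraint $p_+-p_-\le 1$ are not needed for Proposition~\ref{prop12}. One small caveat: the ``regularity chain already used in Proposition~\ref{prop10}'' treats $\hat r(z)|Dv|^{\tau(z)-1}$ as a fixed $L^\infty$ datum coming from a known $v\in C_0^1(\overline\Omega)$, whereas in your setting the source depends on $Dy$ itself; the $C^{1,\alpha}$ estimate still holds because the gradient growth $\tau_+-1<p_--1$ is strictly sub-natural, but strictly speaking you should invoke a Lieberman-type result allowing gradient dependence rather than the Fan--Zhao/Fukagai--Narukawa citations verbatim. The paper's own ``as before'' at this step is equally brief, so this is not a defect of your argument relative to theirs.
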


\begin{proof}
  Let $u\in D$. Then $u\in{\rm int}\,C_+$ and
$$
\frac{1}{t}u=\beta(u) \mbox{ with }0<t<1.
$$

So, we have
\begin{eqnarray}\nonumber
  && \langle A_{p(z)}\left(\frac{1}{t}u\right),h\rangle+\langle A_{q(z)} \left(\frac{1}{t}u\right),h\rangle \\
  &=& \int_\Omega \hat{r}(z)|Du|^{\tau(z)-1}hdz+\int_\Omega f\left(z,\frac{1}{t}u\right)hdz \label{eq56}\\ \nonumber
  && \mbox{ for all }h\in W^{1,p(z)}_0(\Omega).
\end{eqnarray}

In \eqref{eq56} we choose $h=u\in  W^{1,p(z)}_0(\Omega)$. Then
\begin{eqnarray*}
  \frac{1}{t^{p_-}}\rho_p(Du)&\leq&  \int_\Omega \hat{r}(z)|Du|^{\tau(z)-1}udz+\int_\Omega f\left(z,\frac{1}{t}u\right)udz \\
   &\leq& \int_\Omega \hat{r}(z)|Du|^{\tau(z)-1}udz+\frac{1}{t^{p_+-1}}\int_\Omega f(z,u)udz \\
   && \mbox{ (see hypothesis $H_1(v)$),} \\
  \Rightarrow \rho_p(Du)&\leq&  \int_\Omega \hat{r}(z)|Du|^{\tau(z)-1}udz+t^{1-(p_+-p_-)}\int_\Omega f(z,u)udz \\
   &\leq& \int_\Omega \hat{r}(z)|Du|^{\tau(z)-1}udz+\int_\Omega f(z,u)udz \\
  && \mbox{ (since, by hypotheses $H_0$, $0\leq p_+-p_-\leq 1$ and $t\in(0,1)$), } \\
   \Rightarrow\rho_p(Du)&-&\int_\Omega \vartheta(z)u^{p(z)}dz-\varepsilon\rho_p(u)\leq \int_\Omega \hat{r}(z)|Du|^{\tau(z)-1}udz +C_{15} \\
  && \mbox{ for some $C_{15}=C_{15}(\varepsilon)>0$ (see \eqref{eq50}).}
\end{eqnarray*}

Using Lemma \ref{lem4} and choosing $\varepsilon\in(0,\hat{\lambda}_1C_1)$ (see \eqref{eq3}), we obtain
\begin{equation}\label{eq57}
  \rho_p(Du)\leq C_{16}\left[1+\int_\Omega \hat{r}(z)|Du|^{\tau(z)-1}udz\right] \mbox{ for some }C_{16}>0.
\end{equation}

Using H\"older's inequality and the Sobolev embedding theorem (see Section 2), we have
\begin{equation}\label{eq58}
  \int_\Omega \hat{r}(z)|Du|^{\tau(z)-1}udz\leq C_{17}\||Du|^{\tau(z)-1}\|_{\tau'(z)}\|u\|
\end{equation}
for some $C_{17}>0$.

We may assume that
\begin{equation}\label{eq59}
  \|\,|Du|^{\tau(z)-1}\|_{\tau'(z)}\geq1,\;\|u\|\geq1.
\end{equation}

Then from \eqref{eq58} it follows that
\begin{eqnarray}\nonumber
  && \int_\Omega \hat{r}(z)|Du|^{\tau(z)-1}udz \\ \nonumber
  &\leq& C_{18}\rho_{\tau'}\left(|Du|^{\tau(z)-1}\right)^{\frac{1}{\tau'_+}}\|u\| \mbox{ for some } C_{18}>0. \\ \nonumber
  && \mbox{ (see \eqref{eq59} and Proposition \ref{prop1})} \\
  &\leq& C_{19}\rho_{\tau'}\left(|Du|^{\tau(z)-1}\right)^{\frac{1}{\tau'_+}}
\rho_p(Du)^{\frac{1}{p_+}} \label{eq60}\\ \nonumber
  && \mbox{ (using the Poincar\'e inequality, \eqref{eq59} and Proposition \ref{prop1}). }
\end{eqnarray}

We return to \eqref{eq57} and use \eqref{eq60}. We obtain
\begin{eqnarray}\nonumber
  \rho_p(Du)^{1-\frac{1}{p_+}} &\leq& C_{20}\left[1+\|Du\|_{\tau_+}^{\tau_+-1}\right] \\ \nonumber
  && \mbox{ for some $C_{20}>0$ (see Proposition \ref{prop1}), } \\
  \Rightarrow \|u\|^{\frac{p_-}{p_+}(p_+-1)} &\leq& C_{21} \left[1+\|u\|^{\tau_+-1}\right] \label{eq61} \\ \nonumber
  && \mbox{ for some $C_{21}>0$ (see \eqref{eq59} and Proposition \ref{prop1}). }
\end{eqnarray}

But $\frac{p_-}{p_+}[p_+-1]\geq p_- -1>\tau_+-1$ (see hypotheses $H_0$). So, from \eqref{eq61} it follows that
$$
D\subseteq W^{1,p(z)}_0(\Omega) \mbox{ is bounded. }
$$

As before (see the proof of Proposition \ref{prop6}), using the anisotropic regularity theory, we infer that
$$
D\subseteq C_0^1(\overline{\Omega}) \mbox{ is relatively compact, thus bounded. }
$$
The proof is now complete.
\end{proof}

Therefore we have proved that
\begin{itemize}
  \item the minimal solution map $\beta(\cdot)$ is compact (see Proposition \ref{prop11});
  \item $D\subseteq C_0^1(\overline{\Omega})$ is bounded (see Proposition \ref{prop12}).
\end{itemize}

So, we can apply Theorem \ref{th3} (the Leray-Schauder Alternative Principle) and have the following existence theorem for problem \eqref{eq1}.

\begin{theorem}
  If hypotheses $H_0$, $H_1$ hold, then problem \eqref{eq1} has a positive solution $\hat{u}\in {\rm int}\,C_+$, $\overline{u}\leq\hat{u}$.
\end{theorem}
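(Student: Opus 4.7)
The plan is to apply the Leray--Schauder Alternative Principle (Theorem \ref{th3}) with $X = C_0^1(\overline{\Omega})$ and $\xi = \beta$, the minimal solution map introduced at the end of Section 3. All the substantive work has already been done: Proposition \ref{prop11} says that $\beta: C_0^1(\overline{\Omega}) \to C_0^1(\overline{\Omega})$ is compact, and Proposition \ref{prop12} says that the set
$$D = \{u \in C_0^1(\overline{\Omega}) : u = t\beta(u),\ 0 < t < 1\}$$
is bounded. Therefore alternative $(a)$ of Theorem \ref{th3} fails, so alternative $(b)$ must hold: there exists $\hat{u} \in C_0^1(\overline{\Omega})$ with $\beta(\hat{u}) = \hat{u}$.

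Next I would unpack what this fixed point means. By definition of $\beta$, we have $\hat{u} = \beta(\hat{u}) = \tilde{u}_{\hat{u}} \in S_{\hat{u}}^+ \subseteq {\rm int}\,C_+$, where $S_{\hat{u}}^+$ is the positive solution set of the frozen problem \eqref{eq13} with $v = \hat{u}$. In particular $\hat{u}$ is a weak solution of
$$-\Delta_{p(z)}\hat{u} - \Delta_{q(z)}\hat{u} = g_{\hat{u}}(z, \hat{u}) = \hat{r}(z)|D\hat{u}|^{\tau(z)-1} + f(z, \hat{u}) \quad \text{in } \Omega,$$
with $\hat{u}|_{\partial \Omega} = 0$ and $\hat{u} \geq 0$. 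Since the frozen source term and the actual source term coincide when $v = \hat{u}$, this is precisely problem \eqref{eq1}.

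Finally, the lower bound $\overline{u} \leq \hat{u}$ is immediate from Proposition \ref{prop8}, applied to the element $\hat{u} \in S_{\hat{u}}^+$.

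There is essentially no obstacle at this stage. All the analytical difficulty was absorbed into the preceding propositions: the variational analysis of the frozen problem (Propositions \ref{prop5}--\ref{prop6}), the construction of the uniform lower barrier $\overline{u}$ (Proposition \ref{prop7}--\ref{prop8}), the existence of the minimal positive solution that legitimizes the single-valued map $\beta$ (Proposition \ref{prop9}), and the compactness and a priori bound (Propositions \ref{prop11}--\ref{prop12}), where hypothesis $H_1(v)$ together with $\tau_+ < p_-$ and $0 \leq p_+ - p_- \leq 1$ were crucially used. The theorem itself is just the assembly step that feeds these two facts into Theorem \ref{th3}.
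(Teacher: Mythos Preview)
Your proposal is correct and follows exactly the paper's approach: the theorem is stated immediately after noting that Propositions \ref{prop11} and \ref{prop12} supply the two hypotheses of Theorem \ref{th3}, so a fixed point $\hat u=\beta(\hat u)\in S_{\hat u}^+\subseteq{\rm int}\,C_+$ exists, and Proposition \ref{prop8} gives $\overline u\le\hat u$. You have merely spelled out the assembly step in a bit more detail than the paper does.
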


\medskip
\subsection*
{Acknowledgments}  The authors would like to thank a knowledgeable referee for his/her corrections and remarks, which improved the initial version of this paper.
This research was supported by the Slovenian Research Agency grants
P1-0292, J1-8131, N1-0064, N1-0083, and N1-0114.  The research of Vicen\c tiu D. R\u adulescu was supported by a grant of the Romanian Ministry of Research, Innovation and Digitization, CNCS/CCCDI--UEFISCDI, project number PCE 137/2021, within PNCDI III.

\end{document}